\newtheorem{theorem}{Theorem}[section]
\newtheorem{lemma}[theorem]{Lemma}
\newtheorem{proposition}[theorem]{Proposition}
\newtheorem{corollary}[theorem]{Corollary}
\newtheorem{example}[theorem]{Example}
\theoremstyle{definition}
\newtheorem{definition}[theorem]{Definition}
\theoremstyle{remark}
\newtheorem{remark}[theorem]{Remark}
\renewcommand{\epsilon}{\varepsilon}
\DeclareMathOperator{\Orb}{Orb}
\title[Shadowing, ICT and $\alpha$-limit sets]{Shadowing, Internal Chain Transitivity and $\mathlarger{\mathlarger{{\mathlarger{\alpha}}}}$-limit sets} 
\author[Good, Meddaugh and Mitchell]{Chris Good, Jonathan Meddaugh and Joel Mitchell}
\begin{document}

\hypersetup{pageanchor=false} 
\maketitle



\begin{abstract}
Let $f \colon X \to X$ be a continuous map on a compact metric space $X$ and let $\alpha_f$, $\omega_f$ and $ICT_f$ denote the set of $\alpha$-limit sets, $\omega$-limit sets and nonempty closed internally chain transitive sets respectively. We show that if the map $f$ has shadowing then every element of $ICT_f$ can be approximated (to any prescribed accuracy) by both the $\alpha$-limit set and the $\omega$-limit set of a full-trajectory. Furthermore, if $f$ is additionally expansive then every element of $ICT_f$ is equal to both the $\alpha$-limit set and the $\omega$-limit set of a full-trajectory. In particular this means that shadowing guarantees that $\overline{\alpha_f}=\overline{\omega_f}=ICT_f$ (where the closures are taken with respect to the Hausdorff topology on the space of compact sets), whilst the addition of expansivity entails $\alpha_f=\omega_f=ICT_f$. We progress by introducing novel variants of shadowing which we use to characterise both maps for which $\overline{\alpha_f}=ICT_f$ and maps for which $\alpha_f=ICT_f$. 
\end{abstract}



\hypersetup{pageanchor=true} 






\section{Introduction}

Let $f \colon X \to X$ be a dynamical system, so that $f$ is a continuous map on the compact metric space $X$. Given a point $x \in X$, its \emph{$\omega$-limit set} is the set of accumulation points of the sequence $x, f(x), f^2(x), \ldots$. Calculating the $\omega$-limit set of a given point is often relatively easy. Conversely one may ask if a given set is an $\omega$-limit set: this can be quite difficult to answer. As such, various authors have either studied, or attempted to characterise, the set of all $\omega$-limit sets, denoted here by $\omega_f$, in a variety of settings. For example, $\omega$-limit sets of continuous maps of the closed unit interval $I$ have been completely characterised in \cite{AgronskyBrucknerCederPearson,BrucknerSmital}: the authors show that a nonempty subset $E$ of $I$ is an $\omega$-limit set of some continuous map $f$ if and only if $E$ is either a closed, nowhere dense set, or a union of finitely many non-degenerate closed intervals. Furthermore, it has been shown that $\omega_f$ is closed (with respect to the Hausdorff topology) for maps of the circle \cite{Pokluda}, the interval \cite{BlokhBrucknerHumkeSmital} and other finite graphs \cite{MaiShao}. It is known \cite{Hirsch} that every $\omega$-limit set is \emph{internally chain transitive}: briefly a set $A \subseteq X$ is internally chain transitive if for any $a,b \in A$ and any $\epsilon>0$ there exists a finite sequence $\langle x_0, x_1, \ldots, x_n \rangle$ in $A$ such that $x_0=a$, $x_n=b$ and $d(f(x_i), x_{i+1})< \epsilon$ for each $i$. We denote the set of nonempty closed internally chain transitive sets by $ICT_f$. The map $f$ is said to have \emph{shadowing} if for each $\epsilon>0$ there is a $\delta>0$ such that for any sequence $\langle x_i \rangle _{i =0} ^\infty$ with $d(f(x_i), x_{i+1}) < \delta$ for each $i$, there is a point $z \in X$ such that $d(f^i(z) , x_i)< \epsilon$ for each $i$. In this case we say $z$ \emph{shadows} or $\epsilon$\emph{-shadows} the sequence $\langle x_i \rangle _{i =0} ^\infty$. Shadowing has both numerical and theoretical importance and has been studied extensively in a variety of settings; in the context of Axiom A diffeomorphisms \cite{bowen-markov-partitions}, in numerical analysis \cite{Corless,CorlessPilyugin,Pearson}, as an important factor in stability theory \cite{Pilyugin, robinson-stability,Walters}, in understanding the structure of $\omega$-limit sets and Julia sets \cite{BarwellGoodOprochaRaines, BarwellMeddaughRaines2015, BarwellRaines2015, Bowen, MeddaughRaines}, and as a property in and of itself \cite{Coven1, GoodMeddaugh2018, GoodMitchellThomas, LeeSakai, Nusse, Pennings, Pilyugin,Sakai2003}. A variety of variants of shadowing have also been studied including, for example, ergodic, thick and Ramsey shadowing \cite{brian-oprocha, bmr, Dastjerdi, Fakhari, Oprocha2016}, limit, or asymptotic, shadowing \cite{BarwellGoodOprocha, GoodOprochaPuljiz2019, Pilyugin2007}, $s$-limit shadowing \cite{BarwellGoodOprocha,GoodOprochaPuljiz2019, LeeSakai}, orbital shadowing \cite{GoodMeddaugh2016,Mitchell, PiluginRodSakai2002,  Pilyugin2007}, and inverse shadowing \cite{CorlessPilyugin, GoodMitchellThomas2, Lee}.

Of particular importance to us is a result of Meddaugh and Raines \cite{MeddaughRaines} who establish that, for maps with shadowing, $\overline{\omega_f}=ICT_f$. More recently, using novel variants of shadowing, Good and Meddaugh \cite{GoodMeddaugh2016} precisely characterised maps for which $\overline{\omega_f}=ICT_f$ and $\omega_f=ICT_f$. 

Whilst the $\omega$-limit set of a point can be thought of as its \emph{target} - it is where the point \emph{ends up} - an $\alpha$-limit set concerns where a point came from - its source, so to speak. 
However, whilst the definition of an $\omega$-limit set is fairly natural, giving an appropriate definition of an $\alpha$-limit set is less straightforward. This is because a point may have multiple points in its preimage (or indeed, if the map is not surjective, it may have empty preimage). Various approaches to this difficulty have been taken; these will be discussed in more detail in Section \ref{SectionAlphaLimitTypes}. We follow the approach taken in \cite{BalibreaPiotr} and \cite{Hirsch}, by refraining from defining such sets for individual points, but rather defining them for \emph{backward trajectories}. Given a point $x \in X$ an infinite sequence $\langle x_i \rangle _{i \leq 0}$ is called a \emph{backward trajectory} of $x$ if $f(x_i)=x_{i+1}$ for all $i \leq -1$ and $x_0=x$. The $\alpha$-limit set of $\langle x_i \rangle _{i \leq 0}$ is the set of accumulation points of this sequence. We denote the set of all $\alpha$-limit sets by $\alpha_f$. Although $\alpha$-limit sets have not been studied quite as extensively as there $\omega$ counterparts, interest in them has been growing (see, for example, \cite{BalibreaPiotr, Coven,CuiDing, Hero, Hirsch}). 

As with $\omega$-limit sets, it is known that $\alpha$-limit sets are internally chain transitive \cite{Hirsch}. In this paper we seek to provide a characterisation of maps for which $\alpha_f$ and $ICT_f$ coincide. We start with the preliminaries in Section \ref{SectionPrelim}. Section \ref{SectionAlphaLimitTypes} is a standalone section in which we briefly explain the various types of $\alpha$-limit sets that have been studied in the literature. 
In Section \ref{SectionShadowing} we show that, for maps with shadowing, for any $\epsilon>0$ and any $A\in ICT_f$ there is a full trajectory whose $\alpha$-limit set and $\omega$-limit set both lie within $\epsilon$ of $A$ (with respect to the Hausdorff distance). Furthermore, we show that the addition of expansivity entails that there is a full trajectory whose limit sets equal $A$. In particular this means that for maps with shadowing $\overline{\alpha_f}=\overline{\omega_f}=ICT_f$, whilst the addition of expansivity means that $\alpha_f=\omega_f=ICT_f$. We progress in Section \ref{SectionCharacterise} by introducing novel types of shadowing which we use to characterise both maps for which $\overline{\alpha_f}=ICT_f$ and maps for which $\alpha_f=ICT_f$, complementing the work of the first and second author in \cite{GoodMeddaugh2016}.





\section{Preliminaries}\label{SectionPrelim}
A \emph{dynamical system} is a pair $(X,f)$ consisting of a compact metric space $X$ and a continuous function $f\colon X \to X$. We say the \emph{positive orbit of $x$ under $f$} is the set of points $\{x, f(x), f^2(x), \ldots\}$; we denote this set by $\Orb^+ _f(x)$. A \emph{backward trajectory} of the point $x$ is a sequence $\langle x_{i}\rangle_{i\leq0}$ for which $f(x_{i})=x_{i+1}$ for all $i\leq -1$ and $x_0=x$. We say a bi-infinite sequence $\langle x_i \rangle _{i \in \mathbb{Z}}$ is a \emph{full orbit} (of each $x_i$) if $f(x_i)=x_{i+1}$ for each $i \in \mathbb{Z}$. We emphasise that a full orbit of a point need not be unique. Note further that we do not assume that the map $f$ is a surjection. (NB. Because we will be particularly concerned with backward accumulation points of individual trajectories, for clarity we will say that a point which does not have an infinite backward trajectory does not have a full orbit. Whenever we say \emph{full orbit}, we mean a bi-infinite trajectory.)


For a sequence $\langle x_i \rangle _{i>N}$ in $X$, where $N \geq -\infty$, we define its $\omega$-limit set, denoted $\omega(\langle x_i\rangle_{i > N} )$, or simply $\omega(\langle x_i\rangle)$, to be the set of accumulation points of the positive tail of the sequence. Formally: 
\[\omega(\langle x_i\rangle)= \bigcap_{M \in \mathbb{N}}\overline{\{x_n \mid n >M\}}.\]
For $x \in X$, we define the $\omega$\emph{-limit set of }$x$: $\omega(x) \coloneqq \omega(\langle f^n(x)\rangle_{n = 0} ^\infty)$. 
In similar fashion, for a sequence $\langle x_i \rangle _{i< N}$ in $X$, where $N \leq \infty$, we define its \emph{$\alpha$-limit set}, denoted $\alpha(\langle x_i\rangle_{i < N} )$, or simply $\alpha(\langle x_i\rangle)$, to be the set of accumulation points of the negative tail of the sequence. Formally: 
\[\alpha(\langle x_i \rangle )= \bigcap_{M \in \mathbb{N}}\overline{\{x_n \mid n <-M\}}.\]
We denote by $\omega_f$ the set of all $\omega$-limit sets of points in $X$. We denote by $\alpha_f$ the set of all $\alpha$-limit sets of full trajectories in $(X,f)$. Note that since $X$ is compact it follows that elements of $\alpha_f$ and $\omega_f$ are closed, compact and nonempty.

We denote by $2^X$ the hyperspace of nonempty compact subsets of $X$. This is a (compact) metric space in its own right with the \emph{Hausdorff metric} induced by the metric $d$. For $A,B \in 2^X$ the \emph{Hausdorff distance} between $A$ and $B$ is given by
\[d_H (A,A^\prime)= \inf \{\epsilon>0 \mid A \subseteq B_\epsilon (A^\prime) \text{ and } A^\prime \subseteq B_\epsilon (A)\}. \]
Note that, as collections of nonempty compact sets, $\alpha_f$ and $\omega_f$ are subsets of $2^X$.

A set $A \subseteq X$ is said to be \emph{invariant} if $f(A) \subseteq A$. It is \emph{strongly invariant} if $f(A)=A$. A nonempty closed set $A$ is \emph{minimal} if $\omega(x)=A$ for all $x \in A$.

A finite or infinite sequence $\langle x_i\rangle_{i=0} ^N$ is said to be an \emph{$\epsilon$-chain} if $d(f(x_i), x_{i+1})< \epsilon$ for all indices $i<N$. If $N=\infty$ then we say the sequence is an \emph{$\epsilon$-pseudo-orbit.} A set $A$ is \emph{internally chain transitive} if for any pair of points $a,b \in A$ and any $\epsilon>0$ there exists a finite $\epsilon$-chain $\langle x_i \rangle _{i=0} ^N$ in $A$ with $x_0=a$, $x_N=b$ and $N\geq 1$. We denote by $ICT_f$ the set of all nonempty closed internally chain transitive sets. Notice that $ICT_f \subseteq 2^X$. Meddaugh and Raines \cite{MeddaughRaines} establish the following result.
\begin{lemma}\textup{\cite{MeddaughRaines}}\label{lemmaICTclosed}
Let $(X,f)$ be a dynamical system. Then $ICT_f$ is closed in $2^X$.
\end{lemma}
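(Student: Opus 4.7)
The plan is to take a Hausdorff-convergent sequence $\langle A_n\rangle$ in $ICT_f$ with limit $A \in 2^X$ and verify directly that $A$ satisfies the definition of internal chain transitivity. Compactness and nonemptiness of $A$ are automatic since $2^X$ consists of nonempty compact sets, so the real content is producing, for arbitrary $a,b \in A$ and $\epsilon > 0$, a finite $\epsilon$-chain in $A$ from $a$ to $b$.

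First I would use uniform continuity of $f$ on the compact space $X$ to pick $\delta > 0$ such that $d(x,y) < \delta$ implies $d(f(x),f(y)) < \epsilon/3$, and then set $\eta = \min(\delta,\epsilon/3)$. Convergence $A_n \to A$ in $d_H$ lets me fix an $n$ with $d_H(A_n, A) < \eta/2$. I then choose $a', b' \in A_n$ with $d(a,a') < \eta/2$ and $d(b,b') < \eta/2$, and use internal chain transitivity of $A_n$ to produce an $\eta/2$-chain $\langle y_0 = a', y_1, \ldots, y_N = b'\rangle$ inside $A_n$.

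The key step is to pull this chain back into $A$: for each intermediate index $0 < i < N$, pick $z_i \in A$ with $d(y_i, z_i) < \eta/2$, and set $z_0 = a$, $z_N = b$. A routine triangle-inequality estimate shows
\[ d(f(z_i), z_{i+1}) \;\leq\; d(f(z_i), f(y_i)) + d(f(y_i), y_{i+1}) + d(y_{i+1}, z_{i+1}) \;<\; \tfrac{\epsilon}{3} + \tfrac{\eta}{2} + \tfrac{\eta}{2} \;\leq\; \epsilon, \]
using the $\delta$-control on $f$ in the first term and $\eta \leq \epsilon/3$ in the other two; the endpoint cases are handled identically since $d(a,a') < \eta/2$ and $d(b,b') < \eta/2$. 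This gives the required $\epsilon$-chain in $A$ with prescribed endpoints, and $N \geq 1$ is preserved from the chain in $A_n$ (or handled trivially if $a = b$ by taking any short $\epsilon$-chain returning to $a$).

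I do not anticipate a serious obstacle; the only subtlety is the bookkeeping of constants, namely choosing $\eta$ small enough that uniform continuity absorbs the $\eta/2$-displacement when passing from $A_n$ to $A$ while still leaving room for the original $\eta/2$-chain slack. The structure of the argument is exactly the standard "approximate pseudo-orbit" shuffle, and no deeper property of $f$ beyond continuity is needed.
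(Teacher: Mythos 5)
Your argument is correct. The paper itself gives no proof of this lemma, citing it to Meddaugh and Raines, and your pull-back-the-chain argument (approximate $a,b$ by points of $A_n$, take an $\eta/2$-chain there, project each link back into $A$, and absorb the displacement via uniform continuity) is precisely the standard proof found in that reference; the constant bookkeeping checks out, since $d(f(z_i),z_{i+1}) < \epsilon/3 + \eta/2 + \eta/2 \leq \epsilon$ with $\eta \leq \min(\delta,\epsilon/3)$.
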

Hirsch \emph{et al.}\ \cite{Hirsch} show that the $\alpha$-limit set (resp.\ $\omega$-limit set) of any pre-compact backward (resp.\ forward) trajectory is internally chain transitive. Since our setting is a compact metric space all $\alpha$- and $\omega$- limit sets are internally chain transitive. We formulate this as Lemma \ref{LemmaAlphaOmegaAreICT} below.
\begin{lemma}\textup{\cite{Hirsch}} \label{LemmaAlphaOmegaAreICT} Let $(X,f)$ be a dynamical system. Then $\alpha_f \subseteq ICT_f$ and $\omega_f \subseteq ICT_f$.
\end{lemma}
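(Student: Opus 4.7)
The plan is to handle the $\omega$-case first, since the two statements $\omega_f \subseteq ICT_f$ and $\alpha_f \subseteq ICT_f$ are essentially identical once the indexing convention is sorted out. The ingredients I will lean on throughout are uniform continuity of $f$ on the compact space $X$ together with a \emph{trapping} property: the forward trajectory of $x$ eventually enters, and never leaves, any open neighbourhood of $\omega(x)$.

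I would first establish the trapping property. If $U \supseteq \omega(x)$ is open and, for contradiction, $f^{n_k}(x) \notin U$ for infinitely many $n_k \to \infty$, then by compactness of $X$ the sequence $\langle f^{n_k}(x)\rangle$ has an accumulation point in the closed set $X \setminus U$; but any such accumulation point would have to lie in $\omega(x) \subseteq U$, a contradiction. The mirror statement for backward trajectories (used in the $\alpha$-case) holds by exactly the same compactness argument applied to the negative tail.

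Now fix $a, b \in A := \omega(x)$ and $\epsilon > 0$. Using uniform continuity I pick $\eta \in (0,\epsilon/2)$ so that $d(p,q)<\eta$ forces $d(f(p),f(q))<\epsilon/2$. Using the trapping property together with $a, b \in A$, I choose indices $n < m$ with $f^n(x) \in B_\eta(a)$, $f^m(x)\in B_\eta(b)$, and $f^k(x) \in B_\eta(A)$ for every $n \le k \le m$. For each interior $k$ select $y_k \in A$ with $d(y_k, f^k(x)) < \eta$, and set $y_n := a$, $y_m := b$. Then $\langle y_n, y_{n+1},\ldots,y_m\rangle$ is a sequence in $A$ of length at least one from $a$ to $b$, and the two-term telescoping bound
\[
d(f(y_k), y_{k+1}) \le d(f(y_k), f^{k+1}(x)) + d(f^{k+1}(x), y_{k+1}) < \tfrac{\epsilon}{2} + \eta < \epsilon
\]
confirms it is an $\epsilon$-chain, giving internal chain transitivity.

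The $\alpha$-case is then an index-reversal exercise: given $A = \alpha(\langle x_i\rangle_{i \le 0})$ and $a,b \in A$, pick deeply negative indices $n < m \le 0$ with $x_n \in B_\eta(a)$, $x_m \in B_\eta(b)$, and $x_k \in B_\eta(A)$ for every $n \le k \le m$ (using the backward trapping statement), then repeat the construction above with $x_k$ in place of $f^k(x)$. I expect no substantive obstacle beyond the minor bookkeeping around the direction of indexing on the backward trajectory; no extra dynamical input is required.
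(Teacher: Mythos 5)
Your proof is correct. The paper does not prove this lemma itself but cites it to Hirsch \emph{et al.}, and your argument (trap the tail of the trajectory in an $\eta$-neighbourhood of the limit set, project each orbit point to a nearby point of $A$, and control the jumps by uniform continuity) is essentially the standard proof given in that reference, carried out correctly in both the forward and backward cases.
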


\begin{remark}\label{RemarkOmegaAlphaDistinct}
When one first encounters positive and negative limit sets of trajectories, it is natural to ask (for a surjective map) if every $\omega$-limit set is also an $\alpha$-limit set, along with the converse. The following is an example of a homeomorphism for which neither is true. Take two copies of the interval and embed them side by side in the plane (i.e.\ one on the left and one on the right). Snake one infinite line between them which has each interval as an accumulation set - akin to how the topologist's sine curve approaches the $y$-axis. Define a continuous map as follows: Let every point on each of the two intervals be fixed whilst points on the line move continuously along it, away from the left interval and towards the right. 
It follows that the left interval is the $\alpha$-limit set of the unique backward trajectory of any point on the line, whilst the right left interval is the $\omega$-limit set of any point on the line. However it is clear that the left interval is not an $\omega$-limit set, whilst the right interval is not an $\alpha$-limit set.
\end{remark}

\begin{remark}
As stated in \cite{BalibreaPiotr}, a minimal set is both an $\omega$-limit set and an $\alpha$-limit set.
\end{remark}

Whilst it may be the case that $\alpha_f \neq \omega_f$, 
it is true that every $\alpha$-limit set contains the $\omega$-limit set of every one of its points and, similarly, every $\omega$-limit set contains an $\alpha$-limit set of a backward trajectory of each of its points. To show this we recall the well-known fact that the $\omega$-limit sets in compact systems are strongly invariant (e.g.\ \cite[Theorem 3.1.9]{deVries}). The same is true of the $\alpha$-limit sets of backward trajectories (e.g.\ \cite[Lemma 1]{BalibreaPiotr}).

\begin{proposition}\label{propositionContainment}
Let $x,y \in X$ and suppose that $\langle z_{i}\rangle_{i\leq0}$ is a backward trajectory of a point $z=z_0 \in X$. Then:
\begin{enumerate}
    \item If $x \in \alpha (\langle z_{i}\rangle)$ then $\overline{\Orb_f ^+(x)} \subseteq \alpha (\langle z_{i}\rangle) $.
    \item If $y \in \omega(x)$ then there is a backward trajectory $\langle y_{i}\rangle_{i\leq0}$, with $y_0=y$, which lies in $\omega(x)$ and such that $\alpha (\langle y_{i}\rangle)  \subseteq \omega(x)$.
\end{enumerate}
\end{proposition}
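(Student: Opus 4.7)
My plan is to prove both parts by extracting subsequences from the defining sequences and using continuity of $f$, leveraging closedness/compactness of the limit sets in the background.

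For (1), the key observation is that $\alpha$-limit sets are forward invariant in a strong sense. Since $x \in \alpha(\langle z_i \rangle)$, I pick a sequence of indices $i_k \to -\infty$ with $z_{i_k} \to x$. For any fixed $n \geq 0$, continuity of $f^n$ yields
\[ z_{i_k + n} = f^n(z_{i_k}) \longrightarrow f^n(x) \quad \text{as } k\to\infty, \]
and since $i_k + n \to -\infty$ as well, this identifies $f^n(x)$ as an accumulation point of the negative tail of $\langle z_i\rangle$. Thus $f^n(x) \in \alpha(\langle z_i \rangle)$ for every $n \geq 0$, i.e.\ $\Orb^+_f(x) \subseteq \alpha(\langle z_i \rangle)$. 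Since $\alpha$-limit sets are closed, taking closures gives the claimed containment.

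For (2), I use a diagonal argument to build the backward trajectory from the sequence witnessing $y \in \omega(x)$. Choose $n_k \to \infty$ with $f^{n_k}(x) \to y$. For each $j \leq 0$, the points $f^{n_k + j}(x)$ are defined for all sufficiently large $k$ and lie in the compact space $X$, so by a standard diagonal extraction I can pass to a subsequence (still denoted $n_k$) such that for every $j \leq 0$, $f^{n_k + j}(x)$ converges to some point $y_j \in X$, with $y_0 = y$. Continuity of $f$ then gives
\[ f(y_j) = \lim_{k\to\infty} f\bigl(f^{n_k + j}(x)\bigr) = \lim_{k\to\infty} f^{n_k + j + 1}(x) = y_{j+1}, \]
so $\langle y_j \rangle_{j \leq 0}$ is a backward trajectory of $y$.

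It remains to show the trajectory lies in $\omega(x)$ and that its $\alpha$-limit set does too. For each fixed $j \leq 0$, the sequence $n_k + j$ tends to $+\infty$, hence $y_j \in \omega(x)$; so the whole backward trajectory is contained in $\omega(x)$. Finally, any point of $\alpha(\langle y_j\rangle)$ is a limit of a subsequence of the $y_j$, and since $\omega(x)$ is closed and contains every $y_j$, such limits lie in $\omega(x)$. The only subtlety is making sure the diagonal extraction genuinely produces a full backward trajectory (not a merely finite piece), which is guaranteed because the index set $\{j \leq 0\}$ is countable and $X$ is compact, allowing a single subsequence to work for all $j$ simultaneously.
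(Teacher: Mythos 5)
Your proof is correct, but it takes a more self-contained route than the paper's. For part (1) the paper simply cites the known fact that $\alpha$-limit sets of backward trajectories are closed and invariant under $f$; you reprove that invariance directly from the definition via the identity $z_{i_k+n}=f^n(z_{i_k})$ and continuity of $f^n$ (with the implicit but harmless caveat that this identification requires $i_k+n\leq 0$, which holds for all large $k$). For part (2) the paper invokes strong invariance of $\omega(x)$ and builds the backward trajectory by recursively choosing a preimage of $y$ in $\omega(x)$, then a preimage of that, and so on; you instead run a single diagonal extraction on the sequence $\langle f^{n_k+j}(x)\rangle_k$ over all $j\leq 0$ simultaneously, which produces the whole backward trajectory in one compactness argument and makes it immediate that each $y_j$ lies in $\omega(x)$ (since $n_k+j\to\infty$). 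The two constructions are morally the same --- both ultimately rest on compactness of $X$ and closedness of the limit sets --- but the paper's version is shorter because it outsources the invariance lemmas to the literature, while yours is fully self-contained, proves the strong-invariance input along the way, and replaces the step-by-step preimage selection with a single subsequence that works for every index at once. Both are complete proofs; there is no gap in yours.
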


\begin{proof}
Condition (1) is immediate from the fact that $\alpha$-limit sets are closed and invariant under $f$.

Now suppose $y \in \omega(x)$ and let $y_0=y$. Since $\omega$-limit sets are strongly invariant $y$ has a preimage in $\omega(x)$, call it $y_{-1}$. This itself has a preimage in $\omega(x)$; call it $y_{-2}$. Continuing in this manner gives a backward trajectory $\langle y_{i}\rangle_{i\leq0}$ of $y$ which lies in $\omega(x)$. The result now follows by observing that $\omega(x)$ is closed.
\end{proof}

\begin{remark}
In \cite{Hero} the author proves condition (1) in Proposition \ref{propositionContainment} holds for interval maps.
\end{remark}

A point $x$ is said to \emph{$\epsilon$-shadow} a sequence $\langle x_i\rangle_{i=0} ^\infty$ if $d(f^i(x), x_i)< \epsilon$ for all $i \in \mathbb{N}_0$. We say the system $(X,f)$ has the \emph{shadowing property}, or simply shadowing, if for every $\epsilon>0$ there exists $\delta>0$ such that every $\delta$-pseudo-orbit is $\epsilon$-shadowed. 

\begin{definition}
Suppose that $(X,f)$ is a dynamical system.
\begin{enumerate}
\item The sequence $\langle x_{i}\rangle_{i\leq0}$ is a \emph{backward $\delta$-pseudo-orbit} if $d(f(x_{i}),x_{i+1})<\delta$ for each $i\leq-1$.
\item The sequence $\langle x_{i}\rangle_{i\in\mathbb Z}$ is a \emph{two-sided $\delta$-pseudo-orbit} if $d(f(x_{i}),x_{i+1})<\delta$ for each $i\in\mathbb Z$.
\item The system $(X,f)$ has \emph{backward shadowing} if for any $\epsilon>0$ there exists $\delta>0$ such that for any backward $\delta$-pseudo-orbit $\langle x_i \rangle _{i \leq 0}$ there exists a backward trajectory $\langle z_i \rangle _{i \leq 0}$ such that $d(x_i, z_i) < \epsilon$ for all $i \leq 0$.
\item The system $(X,f)$ has \emph{two-sided shadowing} if for any $\epsilon>0$ there exists $\delta>0$ such that for any two-sided $\delta$-pseudo-orbit $\langle x_i \rangle _{i \in \mathbb{Z}}$ there exists a full trajectory $\langle z_i \rangle _{i \in \mathbb{Z}}$ such that $d(x_i, z_i) < \epsilon$ for all $i \in \mathbb{Z}$.
\end{enumerate}
\end{definition}
A sequence $\langle x_{i}\rangle_{i=0} ^\infty$ is called an \emph{asymptotic pseudo-orbit} if $d(f(x_i), x_{i+1}) \rightarrow 0$ as $i \rightarrow \infty$. Similarly a sequence $\langle x_{i}\rangle_{i\leq0}$ is a \emph{backward asymptotic pseudo-orbit} if $d(f(x_{i}),x_{i+1})\rightarrow 0$ as $i \rightarrow -\infty$. Finally a sequence $\langle x_{i}\rangle_{i \in \mathbb{Z}}$ is called a \emph{two-sided asymptotic pseudo-orbit} if $d(f(x_i), x_{i+1}) \rightarrow 0$ as $i \rightarrow \pm \infty$.

The system $(X,f)$ has \emph{s-limit shadowing} if, in addition to having shadowing, for any $\epsilon>0$ there exists $\delta>0$ such that for any asymptotic $\delta$-pseudo orbit $\langle x_i \rangle_{i=0} ^\infty$ there exists $z \in X$ which \emph{asymptotically} $\epsilon$-shadows $\langle x_i \rangle_{i=0} ^\infty$ (i.e.\ $d(f^i(z), x_i) \to 0$ as $i \to \infty$ and $d(f^i(z), x_i) < \epsilon$ for all $i \in \mathbb{N}_0$). The system has \emph{two-sided s-limit shadowing} if, in addition to two-sided shadowing, for any $\epsilon>0$ there exists $\delta>0$ such that for any two-sided asymptotic $\delta$-pseudo orbit $\langle x_i \rangle_{i\in \mathbb{Z}}$ there exists a full trajectory $\langle z_i \rangle _{i \in \mathbb{Z}}$ which asymptotically $\epsilon$-shadows $\langle x_i \rangle_{i \in \mathbb{Z}}$ (i.e.\ $d(f^i(z), x_i) \to 0$ as $i \to \pm\infty$ and $d(f^i(z), x_i)<\epsilon$ for all $i \in \mathbb{Z}$).

\subsection{Shift spaces}\label{subsectionShiftSpaces}
Given a finite set $\Sigma$ considered with the discrete topology, \emph{the one-sided full shift with alphabet }$\Sigma$ consists of the set of infinite sequences in $\Sigma$, that is $\Sigma^{\mathbb{N}_0}$, which we consider with the product topology. This forms a dynamical system with the \emph{shift map} $\sigma$, given by \[\sigma\left(\langle a_i \rangle _{i\geq 0}\right)=\langle a_{i+1} \rangle _{i \geq 0}. \]
A \emph{one-sided shift space} is some compact strongly invariant (under $\sigma$) subset of some one-sided full shift. 

In similar fashion, the \emph{two-sided full shift with alphabet} $\Sigma$ consists of the set of bi-infinite sequences in $\Sigma$, that is $\Sigma^{\mathbb{Z}}$, which we consider with the product topology. As before, this forms a dynamical system with the \emph{shift map} $\sigma$, which we define by saying that, for each $i \in \mathbb{Z}$, \[\pi_i(\sigma\left(\langle a_i \rangle _{i\in \mathbb{Z}}\right))= a_{i+1}, \]
where $\pi_i$ is the projection map for each $i$.
A \emph{two-sided shift space} is some compact strongly invariant (under $\sigma$) subset of some two-sided full shift.  If $(X, \sigma)$ is a two-sided shift space and $x= \langle a_i \rangle _{i \in \mathbb{Z}} \in X$ then we refer to the sequences $ \langle a_i \rangle _{i \geq 0}$ and  $ \langle a_i \rangle _{i \leq 0}$ as the \emph{right-tail} and \emph{left-tail} of $x$ respectively.

Given an alphabet $\Sigma$, a word in $\Sigma$ is a finite sequence $a_0 a_1\ldots a_m$, made up of elements of $\Sigma$. Let $\mathcal{F}$ be a finite set of words in $\Sigma$. The \emph{one-sided shift of finite type associated with $\mathcal{F}$} is the dynamical system $(X_\mathcal{F}, \sigma)$ where $X_\mathcal{F}$ is the set of all infinite sequences which do not contain any occurrence of any word from $\mathcal{F}$. The \emph{two-sided shift of finite type associated with $\mathcal{F}$} is the dynamical system $(Z_\mathcal{F}, \sigma)$ where $Z_\mathcal{F}$ is the set of all bi-infinite sequences which do not contain any occurrence of any word from $\mathcal{F}$. A shift space $(X,\sigma)$ is said to be a \emph{one-sided (resp.\ two-sided) shift of finite type} if there exists a finite set of words $\mathcal{F}$ such that $X= X_\mathcal{F}$ (resp.\ $X=Z_\mathcal{F}$).

If $(X, \sigma)$ is a one-sided shift space, $x= \langle a_i \rangle _{i \geq 0} \in X$ and $n \in \mathbb{N}_0$, we refer to the word $a_0a_1\ldots a_n$ as an \emph{initial segment} of $x$. In similar fashion, if $(X, \sigma)$ is a two-sided shift space and $x= \langle a_i \rangle _{i \in \mathbb{Z}} \in X$ and $n \in \mathbb{N}_0$, we refer to the word $a_{-n}\ldots a_{-1} a_0a_1\ldots a_n$ as a \emph{central segment} of $x$. In the two-sided case, when writing out an element of $X$ in full we use a ``$\cdot$" to indicate the position of the middle of the central segment:
\[ x=\ldots a_{-3} a_{-2} a_{-1} \cdot  a_0 a_1 a_2 a_3 \ldots.\]

The following two theorems concerning limit sets in shift spaces are folklore.

\begin{theorem}\label{thmLimitSetsInOneSidedShiftSpace}
Let $(X,\sigma)$ be a one-sided shift space. Let $x,y \in X$. Then $y \in \omega(x)$ if and only if every initial segment of $y$ occurs infinitely often in $x$. Given a backward trajectory $\langle x_i \rangle _{i \leq 0}$ consider the backward infinite sequence $\langle a_i \rangle _{i \leq 0}$ where $a_i = \pi_0 (x_i)$. Then $y \in \alpha(\langle x_i \rangle )$ if and only if every initial segment of $y$ occurs infinitely often in $\langle a_i \rangle _{i \leq 0}$.
\end{theorem}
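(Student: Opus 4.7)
The plan is to translate the topological definition of $\omega$- and $\alpha$-limit sets into the combinatorial statement about word occurrences, relying on the fact that the product topology on $\Sigma^{\mathbb{N}_0}$ has, at each point $y$, a neighbourhood base formed by the cylinders
\[ [y_0 y_1 \cdots y_k] \coloneqq \{ z \in \Sigma^{\mathbb{N}_0} : \pi_j(z) = \pi_j(y) \text{ for } 0 \le j \le k \},\]
as $k$ ranges over $\mathbb{N}_0$. Consequently a point $y \in X$ is an accumulation point of a sequence $\langle z_n \rangle$ in $X$ precisely when every such cylinder contains $z_n$ for infinitely many indices $n$; this is the characterisation I will apply in both halves.

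For the $\omega$-limit half, the observation is simply that $\sigma^n(x) \in [y_0 \cdots y_k]$ if and only if $\pi_{n+j}(x) = \pi_j(y)$ for every $0 \le j \le k$, which is precisely the statement that the length-$(k+1)$ initial segment of $y$ appears in $x$ starting at position $n$. Combined with the cylinder characterisation, $y \in \omega(x)$ is therefore equivalent to each initial segment of $y$ appearing at infinitely many positions in $x$, as claimed.

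For the $\alpha$-limit half, the main preparation is to relate the points $x_i$ of the backward trajectory to the symbol sequence $\langle a_i \rangle_{i \le 0}$. Since $\sigma(x_i) = x_{i+1}$ for each $i \le -1$, an induction on $j$ gives $\pi_j(x_i) = \pi_0(x_{i+j}) = a_{i+j}$ whenever $i + j \le 0$. Thus for any $i \le -k$, $x_i$ lies in $[y_0 \cdots y_k]$ if and only if $a_i a_{i+1} \cdots a_{i+k}$ equals the initial segment $y_0 y_1 \cdots y_k$ of $y$. Since $y \in \alpha(\langle x_i \rangle)$ is equivalent to each cylinder at $y$ containing $x_i$ for infinitely many (hence arbitrarily negative) indices $i$, and since all but finitely many of these indices automatically satisfy $i \le -k$, this translates to each initial segment of $y$ appearing at infinitely many positions of $\langle a_i \rangle_{i \le 0}$.

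The argument is routine bookkeeping and presents no substantial obstacle. The one point requiring care is the indexing in the $\alpha$-limit case, where one must confirm that, for $i$ sufficiently negative, the $k+1$ coordinates of $x_i$ tested by the cylinder align precisely with a length-$(k+1)$ window of the symbol sequence $\langle a_i \rangle_{i \le 0}$ beginning at position $i$; everything else reduces directly to the fact that cylinder sets form a neighbourhood base in the product topology.
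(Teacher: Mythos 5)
Your argument is correct: the paper states this result as folklore and supplies no proof of its own, and your cylinder-set argument (cylinders form a neighbourhood base at $y$, so membership in $\omega(x)$ or $\alpha(\langle x_i\rangle)$ reduces to each cylinder capturing infinitely many terms of the relevant tail, which translates directly into occurrences of initial segments) is precisely the standard proof the authors are implicitly invoking. The one delicate point, the identity $\pi_j(x_i)=a_{i+j}$ for $i+j\le 0$ aligning the coordinates of $x_i$ with a window of $\langle a_i\rangle_{i\le 0}$, is handled correctly.
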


\begin{theorem}\label{thmLimitSetsInTwoSidedShiftSpace}
Let $(X,\sigma)$ be a two-sided shift space. Let $x,y \in X$. Then $y \in \omega(x)$ if and only if every central segment of $y$ occurs infinitely often in the right-tail of $x$. Given a backward trajectory $\langle x_i \rangle _{i \leq 0}$ then $y \in \alpha(\langle x_i \rangle )$ if and only if every central segment of $y$ occurs infinitely often in the left-tail of $x_0$.
\end{theorem}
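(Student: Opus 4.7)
The plan is to reduce both equivalences to the standard characterization of convergence in the product topology: a sequence $\langle z^{(k)} \rangle$ in the two-sided full shift $\Sigma^{\mathbb{Z}}$ converges to $y$ if and only if, for every $n \in \mathbb{N}_0$, the central segment of $z^{(k)}$ of radius $n$ agrees with that of $y$ for all sufficiently large $k$. Since $X$ inherits its topology from $\Sigma^{\mathbb{Z}}$, the same equivalence holds inside $X$. With this in hand, both statements become a matter of translating between ``$\sigma^{k}(x)$ matches $y$ on a central window of radius $n$'' and ``the radius-$n$ central segment of $y$ appears at the appropriate position in $x$''.

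For the $\omega$-limit statement, I would unpack $y \in \omega(x)$ as the existence of a sequence $k_j \to \infty$ with $\sigma^{k_j}(x) \to y$. For $k \geq n$, the central segment of $\sigma^{k}(x)$ of radius $n$ is precisely the word $x_{k-n} x_{k-n+1} \cdots x_{k+n}$, which sits entirely inside the right-tail of $x$. The forward direction is then immediate: convergence forces the radius-$n$ central segment of $y$ to appear at each of the positions $k_j - n$, giving infinitely many occurrences in the right-tail. For the converse, a diagonal argument produces a strictly increasing sequence of indices $k_n$ such that the radius-$n$ central segment of $y$ appears at position $k_n - n$ in $x$; the product-topology criterion then gives $\sigma^{k_n}(x) \to y$, hence $y \in \omega(x)$.

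For the $\alpha$-limit statement, the same argument applies once I correctly relate the backward trajectory $\langle x_i \rangle_{i\leq 0}$ to coordinates of $x_0$. Because $X$ is strongly invariant inside the full two-sided shift, and $\sigma$ is clearly injective on $\Sigma^{\mathbb{Z}}$, the shift is a homeomorphism on $X$; consequently $x_{-k} = \sigma^{-k}(x_0)$ is the unique backward trajectory of $x_0$. In coordinates, $(x_{-k})_j = (x_0)_{j-k}$, so the central segment of $x_{-k}$ of radius $n$ is the length-$(2n+1)$ subword of $x_0$ running from position $-n-k$ to $n-k$; for $k \geq n$ this lies wholly in the left-tail of $x_0$. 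Applying the product-topology criterion, $y \in \alpha(\langle x_i \rangle)$ exactly when, for each $n$, infinitely many such subwords coincide with the radius-$n$ central segment of $y$, which is exactly the stated condition.

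There is no serious obstacle here; the substance of the theorem is essentially bookkeeping with indices. The one point to treat with a little care is the identification $x_{-k} = \sigma^{-k}(x_0)$, which relies on the strong invariance of $X$ under $\sigma$ in the definition of a shift space to ensure that backward trajectories are both well-defined and unique.
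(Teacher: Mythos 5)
Your proof is correct and complete. The paper offers no proof to compare against --- it states this result (together with its one-sided analogue) as folklore and moves on --- so your argument stands on its own; it is the standard one, reducing everything to the cylinder-set description of the product topology and then doing the index bookkeeping. The two points that genuinely needed care are both handled properly: the diagonalisation in the converse direction (choosing a strictly increasing sequence $k_n$ so that $\sigma^{k_n}(x)$ agrees with $y$ on the radius-$n$ window, which is possible precisely because each central segment occurs \emph{infinitely} often), and the identification $x_{-k}=\sigma^{-k}(x_0)$, which you correctly justify via injectivity of $\sigma$ on the full shift together with the strong invariance $\sigma(X)=X$ built into the paper's definition of a two-sided shift space; this is what makes the backward trajectory unique and keeps it inside $X$, so that the $\alpha$-limit statement really does reduce to the same window-matching argument applied to the coordinates $(x_0)_{j-k}$.
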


For those wanting more information about shift systems, \cite[Chapter 5]{deVries} provides a thorough introduction to the topic.

As stated in Lemma \ref{LemmaAlphaOmegaAreICT}, $\alpha_f$ and $\omega_f$ are both subsets of $ICT_f$. Example \ref{ExampleShiftSpaceAlphaOmegaDifferent} gives a surjective shift space $(X, \sigma)$ where $\alpha_\sigma$, $\omega_\sigma$ and $ICT_\sigma$ are all distinct, complementing the discussion in Remark \ref{RemarkOmegaAlphaDistinct}.

\begin{example}\label{ExampleShiftSpaceAlphaOmegaDifferent} Let
$x=1010^210^3\ldots$,
and 
$y=2020^220^3\ldots$. Let \[P(x)=\{ 30^n30^{n-1}\ldots30x \mid n \in \mathbb{N}\}.\]
Take
\[ X = \overline{\bigcup_{z \in P(x)} \Orb^+ _\sigma(z) \cup \Orb^+ _\sigma(y)\cup \{ 0^ny \mid n \in \mathbb{N}\}} ,\]
where the closure is taken with regard to the one-sided full shift on the alphabet $\{0,1,2,3\}$. Considering the system $(X,\sigma)$, $\alpha_\sigma \neq \omega_\sigma \neq ICT_\sigma$. Furthermore $\alpha_\sigma \not\subseteq \omega_\sigma$ and $\omega_\sigma \not\subseteq \alpha_\sigma$.
\end{example}
In Example \ref{ExampleShiftSpaceAlphaOmegaDifferent}, $\omega(x)=\{0^\infty, 0^n10^\infty \mid n \geq 0\}$ and $\omega(y)=\{0^\infty, 0^n20^\infty \mid n \geq 0\}$. It is easy to see that the only other $\omega$-limit set is $\{0^\infty\}$. Thus \[\omega_\sigma=\{\{0^\infty\}, \{0^\infty, 0^n10^\infty \mid n \geq 0\}, \{0^\infty, 0^n20^\infty \mid n \geq 0\}\}.\] Meanwhile \[\alpha_\sigma=\{\{0^\infty\}, \{ 0^\infty, 0^n30^\infty \mid n \geq 0\}\}.\] Finally whilst $ICT_\sigma\supseteq \alpha_\sigma \cup \omega_\sigma$ it additionally contains $\{0^\infty, 0^n10^\infty, 0^n20^\infty \mid n \geq 0\}$, $\{0^\infty, 0^n10^\infty, 0^n30^\infty \mid n \geq 0\} \}$, $\{0^\infty, 0^n20^\infty, 0^n30^\infty \mid n \geq 0\}$ and\newline $\{0^\infty, 0^n10^\infty, 0^n20^\infty, 0^n30^\infty \mid n \geq 0\}$. Hence $\alpha_\sigma \neq \omega_\sigma \neq ICT_\sigma$, $\alpha_\sigma \not\subseteq \omega_\sigma$ and $\omega_\sigma \not\subseteq \alpha_\sigma$.
\section{Various notions of negative limit sets}\label{SectionAlphaLimitTypes}
In the previous section we defined what we mean by the term $\alpha$-limit set: it was defined for backward sequences. Meanwhile the definition of an $\omega$-limit set was extended to individual points. This was done in the only natural way: any given point only has one forward orbit. If one wishes to define the $\alpha$-limit set of a point, say $x$, the best way forward is less obvious; there are multiple approaches one might reasonably take when defining negative limit sets of points. In this standalone section we give a brief outline of several different approaches taken in the literature and give two examples which serve to illustrate their differences. 

For homeomorphisms one can define $\alpha$-limit sets (or negative limit sets) in precisely the same way as $\omega$-limit sets. With non-invertible maps, however, a seemingly natural definition is less obvious. One approach is to take the set of accumulation points of the sequence of sets $f^{-k}(\{x\})$: this is done in \cite{Coven} and \cite{CuiDing}. Call this Approach 1 (A1). Two further approaches are motivated by considering the accumulation points of backward trajectories of the point in question. 
One might say that $y$ is in the negative limit set of a point $x$ if there exists a sequence $\langle y_i\rangle_{i=0} ^\infty$ such that $y_i \in \Orb^+ _f(y_{i+1})$ for each $i$, $x=y_0$ and $\lim_{i \to \infty} y_i = y$: that is, the negative limit set of $x$ is the union of all accumulation points of backward trajectories from $x$. In \cite{Hero} the author defines this set as the \emph{special $\alpha$-limit set of $x$} and examines them for interval maps. These sets are investigated in \cite{Sun} and \cite{Sun2} for graph maps and dendrites. Call this Approach 2 (A2). The final approach, A3, used in \cite{Hero}, is to say $y$ is in the $\alpha$-limit set of a point $x$ if there exists a sequence $\langle y_i\rangle_{i=1} ^\infty$ and a strictly increasing sequence $\langle n_i\rangle_{i=1} ^\infty$ such that $f^{n_i}(y_i)=x$ for each $i$ and $\lim_{i \to \infty} y_i = y$. Clearly this set contains the one given by A2. The converse is not true (see Example \ref{ExampleA2vsA3}). 


By means of demonstrating some of the differences A1-3 yield we provide the following two examples. 

\begin{example}\label{ExampleA1vsA2and3}
Define a map $f\colon [-1,1] \to [-1,1]$ by
\[f(x)=\left\{\begin{array}{lll}
2x+1 & \text{if} & x \in [-1,-\nicefrac{1}{2}),
\\0 & \text{if} & x \in [-\nicefrac{1}{2},\nicefrac{1}{2}) ,
\\ 2x-1 & \text{if} & x \in [\nicefrac{1}{2},1].
\end{array}\right.\]
\end{example}

\begin{figure}[h]
\centering
\begin{tikzpicture}[scale=2.5]
\datavisualization [school book axes,
                    visualize as smooth line,
                    y axis={label},
                    x axis={label} ]

data [format=function] {
      var x : interval [-1:-0.5] samples 100;
      func y = 1+2* \value x;
      }
data [format=function] {
      var x : interval [-0.5:0.5] samples 100;
      func y = 0;
      }
data [format=function] {
      var x : interval [0.5:1] samples 100;
      func y = -1+2 * \value x;
      };
\draw (-0.07,1) -- (0.07,1);      
\node at (-0.15,1) {\footnotesize 1};
\end{tikzpicture}
\caption{Example \ref{ExampleA1vsA2and3}}
\label{figureES}
\end{figure}
In Example \ref{ExampleA1vsA2and3}, under A1 the negative limit set of $0$ can be seen to be the whole interval $[-1,1]$. Under A2 and A3 the negative limit set of $0$ is simply $\{-1,0,1\}$. Notice that the negative limit set of any backward trajectory from $0$ will be either $\{-1\}$ or $\{0\}$ or $\{1\}$.

\begin{example}\label{ExampleA2vsA3}
Define a map $f\colon [-1,2] \to [-1,2]$ by
\[f(x)=\left\{\begin{array}{lll}
2x+2 & \text{if} & x \in [-1,0),
\\2-2x & \text{if} & x \in [0,1) ,
\\ 2x-2 & \text{if} & x \in [1,2].
\end{array}\right.\]
\end{example}

\begin{figure}[h]
\centering
\begin{tikzpicture}[scale=2]
\datavisualization [school book axes,
                    visualize as smooth line,
                    y axis={label},
                    x axis={label} ]

data [format=function] {
      var x : interval [-1:0] samples 100;
      func y = 2+2* \value x;
      }
data [format=function] {
      var x : interval [0:1] samples 100;
      func y = 2+(-2)*\value x;
      }
data [format=function] {
      var x : interval [1:2] samples 100;
      func y = -2+2 * \value x;
      };
\end{tikzpicture}
\caption{Example \ref{ExampleA2vsA3}}
\label{figureES}
\end{figure}
In Example \ref{ExampleA2vsA3}, under A2 the negative limit set of $0$ is $\{2/3, 2\}$. Consider the backward trajectory of $0$ given by the increasing sequence $\langle x_i \rangle_{i \geq 0} $, where $x_0=0$ and $x_1=1$, $x_2=\frac{3}{2}$, $x_3= \frac{7}{4}$.... This sequence approaches $2$. However each point $x_i$ in this sequence has a preimage $y_i$ in the interval $[-1,0)$. Each of these $y_i$ thereby eventually map onto $0$ but they do not themselves have preimages. Furthermore, if $f^n(y_i)=0$ and $f^m(y_{i+1})=0$ then by construction $m>n$. This, together with the fact that $\lim_{i \to \infty} y_i =0$ implies that $0$ is in the negative limit set of itself under A3. Under A3 the negative limit set of $0$ is $\{0, 2/3, 2\}$. (NB. Hero \cite{Hero} provides an example illustating this same difference. For Hero, $0$ would be an $\alpha$-limit point of itself but not a special $\alpha$-limit point of itself: these would only be $2/3$ and $2$.)

\medskip

As stated previously, in this paper we will not define $\alpha$-limit sets of individual points, instead we focus on the accumulation points of individual backward trajectories. Note that this is the approach taken in \cite{BalibreaPiotr} and \cite{Hirsch}.

\section{Shadowing, ICT and $\alpha_f$}\label{SectionShadowing}
The following lemma is a recent observation of the authors \emph{et al.}\ (see \cite{GoodMaciasMeddaughMitchellThomas}). 

\begin{lemma}\label{lemmaBackwardsShadowing}\textup{\cite{GoodMaciasMeddaughMitchellThomas}}
Let $(X,f)$ be a dynamical system with $X$ compact. If $f$ has shadowing then it has backward shadowing and two-sided shadowing. If $f$ is onto then all three properties are equivalent. 
\end{lemma}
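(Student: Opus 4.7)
The plan has two movements, matching the two sentences of the lemma. First, to show that shadowing implies both backward and two-sided shadowing I will use a single compactness/diagonal argument that converts the finite partial shadows produced by ordinary shadowing into the infinite one-sided or bi-infinite shadows required. Second, to obtain the reverse implications under surjectivity I will exploit the fact that the existence of preimages lets us attach any forward pseudo-orbit to an exact backward trajectory and then invoke two-sided shadowing.

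For the first step, fix $\epsilon>0$ and let $\delta>0$ witness $(\epsilon/2)$-shadowing. Given a backward (respectively two-sided) $\delta$-pseudo-orbit $\langle x_i\rangle$, for each $n\geq 0$ I will construct an honest forward $\delta$-pseudo-orbit beginning at $x_{-n}$: in the backward case I append the exact forward orbit of $x_0$ to the finite chain $x_{-n},\ldots,x_0$, while in the two-sided case the tail $\langle x_{k-n}\rangle_{k\geq 0}$ is already such a pseudo-orbit. Shadowing yields $w^{(n)}\in X$ with $d(f^k(w^{(n)}),x_{k-n})<\epsilon/2$ for all relevant $k$, and setting $z^{(n)}_j:=f^{n+j}(w^{(n)})$ produces a genuine forward chunk of orbit that approximates $x_j$ to within $\epsilon/2$ on the index range $j\geq -n$. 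Compactness of $X$ together with a standard diagonal extraction yields $n_k\to\infty$ along which $z^{(n_k)}_j\to z_j$ for every fixed admissible index $j$; continuity of $f$ forces $f(z_j)=z_{j+1}$, and the bound $\epsilon/2$ passes through the limit as $\leq\epsilon/2<\epsilon$, giving the desired backward trajectory or full orbit.

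For the equivalence when $f$ is onto, shadowing already implies the other two properties by the previous step, and two-sided shadowing trivially implies backward shadowing by truncation; so the one nontrivial reverse implication is two-sided shadowing $\Rightarrow$ shadowing. Given a forward $\delta$-pseudo-orbit $\langle x_i\rangle_{i\geq 0}$, surjectivity lets me inductively choose preimages $x_{-1}\in f^{-1}(x_0)$, $x_{-2}\in f^{-1}(x_{-1}),\ldots$, producing a bi-infinite sequence that is a two-sided $\delta$-pseudo-orbit (its backward half is literally an exact orbit). Two-sided shadowing then furnishes a full orbit $\langle z_i\rangle_{i\in\mathbb{Z}}$ with $d(z_i,x_i)<\epsilon$ throughout, and its right tail $\epsilon$-shadows the original forward pseudo-orbit.

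The only real technical subtlety is the diagonal extraction: I must reserve slack (using $\epsilon/2$ rather than $\epsilon$ in shadowing) so that the closed limit bound sharpens back to strict inequality, and I must check that each auxiliary sequence fed to shadowing is genuinely a forward $\delta$-pseudo-orbit after whatever extension has been imposed. Continuity of $f$ in the diagonal limit then recovers the orbit relation $f(z_j)=z_{j+1}$ at every index. These verifications are routine, but they are the essential ingredient throughout.
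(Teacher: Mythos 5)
Your first movement (shadowing $\Rightarrow$ backward and two-sided shadowing via finite partial shadows plus a compactness/diagonalisation) is sound: the auxiliary sequences you feed to ordinary shadowing really are forward $\delta$-pseudo-orbits, the points $z^{(n)}_j=f^{n+j}(w^{(n)})$ are genuine orbit segments, and continuity plus the reserved slack $\epsilon/2$ delivers the limit trajectory with a strict bound. The paper itself does not prove this lemma (it is quoted from an external reference), so I am judging the argument on its own terms, and up to this point it works.

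The gap is in the second movement. You establish shadowing $\Rightarrow$ two-sided shadowing, two-sided shadowing $\Rightarrow$ shadowing (via surjectivity and exact preimages), and that both imply backward shadowing. That proves shadowing and two-sided shadowing are equivalent and that each implies backward shadowing, but it does \emph{not} prove the three properties are equivalent: you never show that backward shadowing implies either of the other two, and your sentence ``the one nontrivial reverse implication is two-sided shadowing $\Rightarrow$ shadowing'' is therefore wrong. The missing implication is not hard, but it needs its own argument: given a two-sided $\delta$-pseudo-orbit $\langle x_i\rangle_{i\in\mathbb Z}$, apply backward shadowing to each left-truncation $\langle x_{i+n}\rangle_{i\leq 0}$ to get backward trajectories ending near $x_n$, re-index, and run exactly your diagonal extraction to produce a full trajectory within $\epsilon$ of $\langle x_i\rangle$; this gives backward shadowing $\Rightarrow$ two-sided shadowing, which combined with your two-sided $\Rightarrow$ shadowing step (where surjectivity enters) closes the cycle. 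A minor additional point: two-sided shadowing does not imply backward shadowing ``by truncation'' alone --- a backward pseudo-orbit must first be \emph{extended} forward (e.g.\ by appending the true forward orbit of $x_0$) before two-sided shadowing can be applied, and only then is the resulting full trajectory restricted to the nonpositive indices.
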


\begin{theorem}\label{thmShadICT} Let $(X,f)$ be a dynamical system with shadowing. Then for any $\epsilon>0$ and any $A \in ICT_f$ there is a full trajectory $\langle x_i \rangle_{i \in \mathbb{Z}}$ such that
\begin{enumerate}
    \item $d_H(\omega(x_0), A) < \epsilon$
    \item $d_H(\alpha(\langle x_i \rangle), A )< \epsilon$.
\end{enumerate}
In particular every element of $ICT_f$ is either in, or is a limit point of, both $\alpha_f$ and $\omega_f$.
\end{theorem}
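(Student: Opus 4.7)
The plan is to reduce the statement to a single shadowing argument applied to a carefully designed two-sided pseudo-orbit inside $A$. Fix $\epsilon>0$ and $A\in ICT_f$. By compactness of $A$, choose a finite $(\epsilon/2)$-dense subset $\{a_1,\ldots,a_k\}\subseteq A$. Since $f$ has shadowing, Lemma \ref{lemmaBackwardsShadowing} gives two-sided shadowing, so there is $\delta>0$ such that every two-sided $\delta$-pseudo-orbit is $(\epsilon/2)$-shadowed by a full trajectory. Without loss of generality assume $\delta<\epsilon/2$.

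Next I construct a two-sided $\delta$-pseudo-orbit $\langle p_i\rangle_{i\in\mathbb Z}$ contained in $A$ in which every $a_j$ appears infinitely often both as $i\to+\infty$ and as $i\to-\infty$. Internal chain transitivity of $A$ supplies, for each ordered pair $(a_r,a_s)$, a finite $\delta$-chain inside $A$ from $a_r$ to $a_s$. For the positive side I concatenate chains implementing the cycle $a_1\!\to\!a_2\!\to\!\cdots\!\to\!a_k\!\to\!a_1\!\to\!\cdots$ repeatedly, beginning at $p_0:=a_1$. For the negative side I concatenate, in the direction of decreasing index, finitely many $\delta$-chains in $A$ whose concatenation is a closed loop $a_1\!\to\!a_2\!\to\!\cdots\!\to\!a_k\!\to\!a_1$; repeating this indefinitely yields $\langle p_i\rangle_{i\le 0}$ with $p_0=a_1$ and the required recurrence property. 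Every $p_i$ lies in $A$ and consecutive pairs satisfy $d(f(p_i),p_{i+1})<\delta$, so the bi-infinite sequence is a two-sided $\delta$-pseudo-orbit.

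Two-sided shadowing produces a full trajectory $\langle x_i\rangle_{i\in\mathbb Z}$ with $d(x_i,p_i)<\epsilon/2$ for every $i\in\mathbb Z$. Because $p_i\in A$ for all $i$, both the forward orbit of $x_0$ and the backward trajectory $\langle x_i\rangle_{i\le 0}$ stay in $B_{\epsilon/2}(A)$, which forces $\omega(x_0)\subseteq \overline{B_{\epsilon/2}(A)}\subseteq B_\epsilon(A)$ and $\alpha(\langle x_i\rangle)\subseteq B_\epsilon(A)$. Conversely, because each $a_j$ appears infinitely often in the positive tail of $\langle p_i\rangle$ and each corresponding $x_i$ is within $\epsilon/2$ of it, by passing to a convergent subsequence we obtain a point of $\omega(x_0)$ within $\epsilon/2$ of $a_j$; combined with the $(\epsilon/2)$-density of $\{a_1,\ldots,a_k\}$ in $A$ this yields $A\subseteq B_\epsilon(\omega(x_0))$. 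The identical argument applied to the negative tail gives $A\subseteq B_\epsilon(\alpha(\langle x_i\rangle))$. Hence $d_H(\omega(x_0),A)<\epsilon$ and $d_H(\alpha(\langle x_i\rangle),A)<\epsilon$. The ``in particular'' clause follows immediately: for every $\epsilon>0$ we have produced $\omega(x_0)\in\omega_f$ and $\alpha(\langle x_i\rangle)\in\alpha_f$ within Hausdorff-distance $\epsilon$ of $A$, so $A$ lies in $\overline{\omega_f}\cap\overline{\alpha_f}$.

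The only nontrivial step is the construction of the backward half of the pseudo-orbit from the nominally forward-directed notion of internal chain transitivity. The idea of closing up chains into loops based at a fixed point $a_1$ handles this cleanly and is where the full strength of two-sided (rather than merely forward) shadowing, guaranteed by Lemma \ref{lemmaBackwardsShadowing}, is essential; without it one could approximate $A$ by an $\omega$-limit set in the manner of \cite{MeddaughRaines}, but not simultaneously by the $\alpha$-limit set of the same full trajectory.
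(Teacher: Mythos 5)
Your proof is correct and follows essentially the same route as the paper's: build a two-sided $\delta$-pseudo-orbit inside $A$ that returns arbitrarily close to every point of $A$ in both tails, invoke two-sided shadowing via Lemma \ref{lemmaBackwardsShadowing}, and compare limit sets in the Hausdorff metric. The only (harmless) difference is that the paper concatenates progressively finer $\nicefrac{1}{2^{l+k}}$-dense chains so that the pseudo-orbit is asymptotic and its $\alpha$- and $\omega$-limit sets equal $A$ exactly (a construction it reuses in Theorems \ref{thmSLimShadICT} and \ref{thmCharAlphaEqualsICT}), whereas you cycle a fixed $\epsilon/2$-net, which is enough for the approximation claimed here.
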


\begin{remark}\label{remarkEtaDense}Before proving Theorem \ref{thmShadICT}, we observe that for any $A \in ICT_f$, for each $\eta>0$ and for each $a \in A$ there exists a finite $\eta$-chain $\langle a=a_0, a_1, \ldots , a_m=a \rangle$ in $A$ which is \emph{$\eta$-dense in $A$}, i.e.\ for each $i \in \{0, \ldots, m\}$, $a_i \in A$ and 
$\bigcup_{i=0} ^m B_\eta (a_i) \supseteq A.$
\end{remark}

\begin{proof}[Proof of Theorem \ref{thmShadICT}]
Let $A \in ICT_f$ and let $\epsilon>0$ be given. By Lemma \ref{lemmaBackwardsShadowing} there exists $\delta>0$ such that every two-sided $\delta$-pseudo-orbit is $\nicefrac{\epsilon}{2}$-shadowed by a full orbit. We will construct a two-sided asymptotic $\delta$-pseudo-orbit in $A$ which is $\eta$-dense in $A$ for all $\eta>0$. To this end, let $l \in \mathbb{N}$ be such that $\nicefrac{1}{2^{l}}< \delta$. Pick $b \in A$. For each $k \in \mathbb{N}_0$ choose a finite $\nicefrac{1}{2^{l+k}}$-chain $\langle a_{k \cdot 0}=b, a_{k \cdot 1}, a_{k \cdot 2}, \ldots, a_{k \cdot m_k} \rangle$ in $A$ which is $\nicefrac{1}{2^{l+k}}$-dense in $A$ and such that $d(f( a_{k \cdot m_k}), b) < \nicefrac{1}{2^{l+k}}$. (Here we are simply using the observation in Remark \ref{remarkEtaDense}.) Concatenation of these chains now gives us an asymptotic $\delta$-pseudo-orbit in $A$:
\[ \langle a_{0 \cdot 0}, a_{0 \cdot 1}, a_{0 \cdot 2}, \ldots, a_{0 \cdot m_0}, a_{1 \cdot 0}, a_{1 \cdot 1}, a_{1 \cdot 2}, \ldots, a_{1 \cdot m_1}, \ldots, a_{k \cdot 0}, a_{k \cdot 1}, a_{k \cdot 2}, \ldots, a_{k \cdot m_k}, \ldots \rangle. \]
We can now extend this into a two-sided asymptotic $\delta$-pseudo-orbit in $A$ by `running backwards' through the $\delta$-chains:

\[\langle  \ldots, a_{2 \cdot 0}, a_{2 \cdot 1}, \ldots, a_{2 \cdot m_2}, a_{1 \cdot 0}, a_{1 \cdot 1}, \ldots, a_{1 \cdot m_1} \cdot  a_{0 \cdot 0}, a_{0 \cdot 1}, \ldots, a_{0 \cdot m_0}, a_{1 \cdot 0}, a_{1 \cdot 1},  \ldots, a_{1 \cdot m_1}, \ldots \rangle.\]
We call this two-sided asymptotic $\delta$-pseudo-orbit $\varphi$. In order to simplify notation we now denote the $k$\textsuperscript{th} coordinate of $\varphi$ by $a_k$, so that, for example, $a_0 = a_{0 \cdot 0}$ is the $0$\textsuperscript{th} coordinate of $\varphi$ and $a_{-1}= a_{1 \cdot m_1}$ is the $(-1)$\textsuperscript{th} coordinate of $\varphi$. With this revised notation $\varphi = \langle a_i \rangle _{i \in \mathbb{Z}}$. From the construction of $\varphi$ it follows that
\[A=\bigcap_{n\geq 0} \overline{\{a_i \mid i \geq n\}},\]
 and
 \[A=\bigcap_{n\leq 0} \overline{\{a_i \mid i \leq n\}}.\]

Let $\langle x_i\rangle_{i \in \mathbb{Z}}$ be a full trajectory such that $d(x_i, a_i) < \nicefrac{\epsilon}{2}$ for all $i \in \mathbb{Z}$.  We claim that $d_H(\alpha(\langle x_i\rangle), A) <\epsilon$. Indeed, pick $a \in A$. Then there is a decreasing sequence $\langle i_n\rangle_{n \in \mathbb{N}}$ of negative integers such that $a= \lim_{n \to \infty} a_{i_n}$. Thus there is $N \in \mathbb{N}$ such that $d(a, a_{i_n})< \nicefrac{\epsilon}{3}$ for all $n>N$. 
Since $d(x_{i_n}, a_{i_n}) <\nicefrac{\epsilon}{2}$ for all $n \in \mathbb{N}$, it follows that $x_{i_n} \in B_\frac{5\epsilon}{6}(a)$ for $n>N$. By compactness the sequence $\langle x_{i_n}\rangle_{n>N}$ has a limit point $z \in \overline{B_{\nicefrac{5\epsilon}{6}}(a)}$: in particular $d(z,a) < \epsilon$. 
Hence $z \in \alpha(\langle x_i\rangle)$ and
\begin{equation}\label{eqn1}
    A \subseteq \bigcup_{y \in \alpha(\langle x_i\rangle)} B_\epsilon(y).
\end{equation} 

Now take $z \in \alpha(\langle x_i\rangle)$. Then there is a decreasing sequence $\langle i_n\rangle_{n \in \mathbb{N}}$ of negative integers such that $z=\lim_{n \to \infty} x_{i_n}$. Let $k \in \mathbb{N}$ be such that $d(z, x_{i_k})< \nicefrac{\epsilon}{2}$. By shadowing $d(a_{i_k} , x_{i_k})< \nicefrac{\epsilon}{2}$. By the triangle inequality $d(z,a_{i_k}) < \epsilon$. Since $a_{i_k} \in A$ it follows that 
\begin{equation}\label{eqn2} \alpha(\langle x_i\rangle) \subseteq \bigcup_{a \in A} B_\epsilon(a).
\end{equation}
By Equations (\ref{eqn1}) and (\ref{eqn2}) it follows that $d_H(\alpha(\langle x_i\rangle_{i \in \mathbb{Z}}), A) <\epsilon$. 

The fact that $d_H(\omega(x_0), A) <\epsilon$ follows by similar argument. 


\end{proof}

The following example shows that the converse to Theorem \ref{thmShadICT} is false.

\begin{example}\label{ExampleConverseThmICTShadFalse}
Define a map $f\colon [-1,1] \to [-1,1]$ by
\[f(x)=\left\{\begin{array}{lll}
(x+1)^2-1& \text{if} & x \in [-1,0),
\\x^2 & \text{if} & x \in [0,1] .
\end{array}\right.\]
Then $f$ does not have shadowing but $ICT_f=\alpha_f=\omega_f$.
\end{example}

\begin{figure}[h]
\centering
\begin{tikzpicture}[scale=2.25]
\datavisualization [school book axes,
                    visualize as smooth line,
                    y axis={label},
                    x axis={label} ]

data [format=function] {
      var x : interval [-1:0] samples 100;
      func y = -1+ (1+\value{x})^2;
      }
data [format=function] {
      var x : interval [0:1] samples 100;
      func y = (\value x)^2;
      };
\end{tikzpicture}
\caption{Example \ref{ExampleConverseThmICTShadFalse}}
\label{figureES}
\end{figure}

In Example \ref{ExampleConverseThmICTShadFalse}, it is easy to see that $ICT_f=\alpha_f=\omega_f =\{\{-1\},\{0\},\{1\}\}$. However $f$ does not have shadowing. Let $\epsilon= 1/3$. For any $\delta>0$ we can construct a $\delta$-pseudo-orbit which is not $\epsilon$-shadowed. Indeed, fix $\delta>0$ and let $n >1$ be such that $\nicefrac{1}{n} < \delta$. Now pick $z \in (\nicefrac{2}{3}, 1)$ such that $\nicefrac{1}{n} \in \Orb^+ _f(z)$. Let $m \in \mathbb{N}$ be such that $f^m(z)=\nicefrac{1}{n}$. Now let $k \in \mathbb{N}$ be such that $f^k(-\nicefrac{1}{n}) \in (-1,-\nicefrac{3}{4})$. Then \[\langle z, f(z), \ldots , f^m(z), 0, -\nicefrac{1}{n}, f(-\nicefrac{1}{n}), \ldots , f^k(-\nicefrac{1}{n})\rangle\] is a finite $\delta$-pseudo orbit. Suppose $x$ $\epsilon$-shadows this pseudo-orbit. Then $x \in B_\epsilon(z) \subseteq (\nicefrac{1}{3}, 1]$. But $[0,1]$ is strongly invariant under $f$, hence $\Orb^+ _f(x) \subseteq [0,1]$. Since $(-1,-\nicefrac{3}{4}) \cap B_\epsilon([0,1]) = \emptyset$ this is a contradiction: $f$ does not exhibit shadowing. 

\medskip



\begin{corollary}\label{CorollaryClosuresEqual}
Let $(X,f)$ be a dynamical system with shadowing. Then $\overline{\alpha_f}=\overline{\omega_f}=ICT_f$.
\end{corollary}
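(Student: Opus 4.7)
The plan is to obtain the two inclusions $\overline{\alpha_f},\overline{\omega_f}\subseteq ICT_f$ from earlier results in the paper, and the reverse inclusions from Theorem \ref{thmShadICT}.

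First I would observe that by Lemma \ref{LemmaAlphaOmegaAreICT} we already have $\alpha_f\subseteq ICT_f$ and $\omega_f\subseteq ICT_f$. Since Lemma \ref{lemmaICTclosed} tells us that $ICT_f$ is a closed subset of $2^X$ (with the Hausdorff topology), taking closures preserves the inclusion, giving $\overline{\alpha_f}\subseteq ICT_f$ and $\overline{\omega_f}\subseteq ICT_f$. This direction does not use the shadowing hypothesis.

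For the reverse direction, I would apply Theorem \ref{thmShadICT}. Fix any $A\in ICT_f$. For each $n\in\mathbb{N}$, the theorem produces a full trajectory $\langle x_i^{(n)}\rangle_{i\in\mathbb{Z}}$ such that both $d_H(\omega(x_0^{(n)}),A)<\tfrac{1}{n}$ and $d_H(\alpha(\langle x_i^{(n)}\rangle),A)<\tfrac{1}{n}$. The sets $\omega(x_0^{(n)})$ form a sequence in $\omega_f$ converging to $A$ in the Hausdorff metric, so $A\in\overline{\omega_f}$; likewise the sets $\alpha(\langle x_i^{(n)}\rangle)$ form a sequence in $\alpha_f$ converging to $A$, so $A\in\overline{\alpha_f}$. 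Since $A\in ICT_f$ was arbitrary, $ICT_f\subseteq\overline{\alpha_f}\cap\overline{\omega_f}$.

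Combining the two directions yields $\overline{\alpha_f}=\overline{\omega_f}=ICT_f$, as required. There is no real obstacle here: the corollary is a direct packaging of Theorem \ref{thmShadICT} together with Lemmas \ref{lemmaICTclosed} and \ref{LemmaAlphaOmegaAreICT}, and the only thing one must be slightly careful about is that Theorem \ref{thmShadICT} delivers approximation by genuine $\alpha$- and $\omega$-limit sets (not merely by sets contained in or containing them), which is exactly what is needed to conclude membership in the Hausdorff closures.
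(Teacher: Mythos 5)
Your proposal is correct and follows essentially the same route as the paper: the inclusions $\alpha_f,\omega_f\subseteq ICT_f$ come from Lemma \ref{LemmaAlphaOmegaAreICT} (with Lemma \ref{lemmaICTclosed} justifying that passing to closures stays inside $ICT_f$), and the reverse inclusions are exactly the ``in or a limit point of'' conclusion of Theorem \ref{thmShadICT}. The paper's proof is just a terser version of what you wrote.
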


\begin{proof}
By Lemma \ref{LemmaAlphaOmegaAreICT}, $ICT_f \supseteq \alpha_f$ and $ICT_f \supseteq \omega_f$. The result now follows immediately from Theorem \ref{thmShadICT}.
\end{proof}

\begin{remark}
The fact that $\overline{\omega_f}=ICT_f$ for systems with shadowing has been proved previously by Meddaugh and Raines in \cite{MeddaughRaines}.
\end{remark}

Since $ICT_f$ is always closed in the hyperspace $2^X$ (see Lemma \ref{lemmaICTclosed}), we also get the following corollary.
\begin{corollary}\label{CorollaryClosed} Let $(X,f)$ be a dynamical system for which $\alpha_f=ICT_f$. Then $\alpha_f$ is closed.
\end{corollary}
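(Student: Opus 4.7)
The plan is to observe that this corollary is an immediate consequence of Lemma \ref{lemmaICTclosed}, which tells us that $ICT_f$ is always closed in the hyperspace $2^X$. The hypothesis $\alpha_f = ICT_f$ then transfers this topological property from $ICT_f$ to $\alpha_f$ for free.

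More explicitly, I would write: since $\alpha_f \subseteq 2^X$ and $ICT_f \subseteq 2^X$, the equality $\alpha_f = ICT_f$ is an equality of subsets of the hyperspace $2^X$. By Lemma \ref{lemmaICTclosed}, $ICT_f$ is closed in $2^X$, so $\alpha_f$ is also closed in $2^X$.

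There is no real obstacle here — the content of the corollary is entirely in Lemma \ref{lemmaICTclosed}, with the equality $\alpha_f = ICT_f$ serving as a direct substitution. The corollary is best viewed as a restatement (or \emph{a fortiori} observation) that, while $\alpha_f$ need not in general be closed in $2^X$, whenever one can establish it coincides with $ICT_f$, closedness comes for free. This will set the stage for the characterisation theorems in Section \ref{SectionCharacterise}, where the necessity of some form of closure condition (implicit in the equality with $ICT_f$) will play a role.
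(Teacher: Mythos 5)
Your proof is correct and is exactly the paper's argument: the corollary is stated as an immediate consequence of Lemma \ref{lemmaICTclosed} ($ICT_f$ is closed in $2^X$), with the hypothesis $\alpha_f = ICT_f$ transferring closedness directly. Nothing is missing.
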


Theorem \ref{thmShadICT} suggests the following question: when is it the case that every element of $ICT_f$ is both the $\alpha$-limit set and the $\omega$-limit set of the same full trajectory? The next result gives a sufficient condition for this to be the case.

\begin{theorem}\label{thmSLimShadICT} Let $(X,f)$ be a dynamical system with two-sided s-limit shadowing. Then for any $A \in ICT_f$ there is a full trajectory $\langle x_i \rangle_{i \in \mathbb{Z}}$ such that $\alpha( \langle x_i \rangle)= \omega(\langle x_i \rangle ) =A$.
In particular $\alpha_f=\omega_f = ICT_f$.
\end{theorem}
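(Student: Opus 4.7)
\medskip

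The plan is to upgrade the construction from the proof of Theorem \ref{thmShadICT} using the asymptotic clause of two-sided s-limit shadowing, so that the shadowing full orbit not only approximates $A$ but in fact has $\alpha$- and $\omega$-limit sets equal to $A$ on the nose.

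Fix $A \in ICT_f$ and let $\delta > 0$ correspond (via two-sided s-limit shadowing) to some fixed $\epsilon > 0$ (the value of $\epsilon$ is immaterial). Choose $l \in \mathbb{N}$ with $1/2^l < \delta$, fix $b \in A$, and for each $k \in \mathbb{N}_0$ pick a finite $1/2^{l+k}$-chain in $A$ starting and returning (up to an error $< 1/2^{l+k}$) to $b$, which is $1/2^{l+k}$-dense in $A$; concatenate these chains in both directions, exactly as in the proof of Theorem \ref{thmShadICT}, to produce a two-sided sequence $\varphi = \langle a_i \rangle_{i \in \mathbb{Z}}$ in $A$. Because the $k$\textsuperscript{th} chain has error $\leq 1/2^{l+k} \to 0$ on both sides, $\varphi$ is a \emph{two-sided asymptotic} $\delta$-pseudo-orbit, and by construction both tails $\langle a_i\rangle_{i\geq N}$ and $\langle a_i\rangle_{i\leq -N}$ are $\eta$-dense in $A$ for every $\eta > 0$. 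Since each $a_i \in A$ (a closed set), it follows directly from the definitions that
\[ \omega(\langle a_i \rangle) \;=\; \bigcap_{M}\overline{\{a_n : n > M\}} \;=\; A \quad\text{and}\quad \alpha(\langle a_i \rangle) \;=\; A. \]

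Now invoke two-sided s-limit shadowing to obtain a full trajectory $\langle x_i \rangle_{i \in \mathbb{Z}}$ that asymptotically $\epsilon$-shadows $\varphi$, so $d(x_i, a_i) \to 0$ as $i \to \pm \infty$. The standard argument that asymptotic closeness of two sequences preserves their sets of accumulation points then gives $\omega(\langle x_i\rangle_{i\geq 0}) = \omega(\langle a_i\rangle_{i\geq 0})$ and $\alpha(\langle x_i\rangle) = \alpha(\langle a_i\rangle)$. Since $x_i = f^i(x_0)$ for $i \geq 0$, this yields
\[ \omega(x_0) \;=\; A \;=\; \alpha(\langle x_i \rangle), \]
which is the first statement. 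The second statement $\alpha_f = \omega_f = ICT_f$ follows because $\alpha_f, \omega_f \subseteq ICT_f$ by Lemma \ref{LemmaAlphaOmegaAreICT}, while the displayed equality shows every element of $ICT_f$ lies in both $\alpha_f$ and $\omega_f$.

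The only nonroutine step is confirming that asymptotic closeness $d(x_i, a_i) \to 0$ genuinely forces equality of the two sequences' limit sets (both in the forward and backward direction); this is a short triangle-inequality argument and is where the hypothesis of \emph{s-limit} shadowing (as opposed to plain shadowing, which only gave approximation in Theorem \ref{thmShadICT}) is essential. Everything else is a direct reuse of the pseudo-orbit built in the previous theorem.
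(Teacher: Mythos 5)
Your proposal is correct and follows essentially the same route as the paper: the same two-sided asymptotic pseudo-orbit from Theorem \ref{thmShadICT}, the asymptotic clause of two-sided s-limit shadowing to obtain a full trajectory with $d(x_i,a_i)\to 0$ as $i\to\pm\infty$, and the observation that asymptotic closeness preserves the limit sets. Your closing remark that only the asymptotic clause (not the $\epsilon$-shadowing clause) is actually used matches a remark the authors make immediately after their proof.
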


\begin{proof}
Let $A \in ICT_f$ and let $\epsilon>0$ be given. By two-sided s-limit shadowing there exists $\delta>0$ such that every two-sided asymptotic $\delta$-pseudo-orbit is asymptotically $\epsilon/2$-shadowed by a full trajectory (without loss of generality we assume $\delta < \nicefrac{\epsilon}{2}$). 

Now follow the construction of the two-sided asymptotic $\delta$-pseudo orbit $\langle a_i \rangle _{i \in \mathbb{Z}}$ in the proof of Theorem \ref{thmShadICT}. Recall that 
\[A=\bigcap_{n\geq 0} \overline{\{a_i \mid i \geq n\}},\]
and
\[A=\bigcap_{n\leq 0} \overline{\{a_i \mid i \leq n\}}.\]
Let $\langle x_i\rangle_{i \in \mathbb{Z}}$ be a full trajectory such that 
\begin{enumerate}
    \item $d(x_i, a_i) < \epsilon/2$ for all $i \in \mathbb{Z}$,
    \item $\lim_{i \to \pm \infty} d(x_i, a_i) =0$. 
\end{enumerate}
It follows that $\alpha( \langle x_i \rangle)= \omega(\langle x_i \rangle ) =A$. The fact that $\alpha_f=\omega_f = ICT_f$ now follows from Lemma \ref{LemmaAlphaOmegaAreICT}.
\end{proof}

\begin{remark}
 We did not use the fact that $\langle x_i\rangle_{i \in \mathbb{Z}}$ $\nicefrac{\epsilon}{2}$-shadows $\langle a_i\rangle_{i \in \mathbb{Z}}$ in the proof of Theorem \ref{thmSLimShadICT}. Therefore, we could replace the hypothesis of ``two-sided s-limit shadowing" with the weaker condition: ``there exists $\delta>0$ such that for any two-sided $\delta$-pseudo-orbit $\langle y_i \rangle_{i \in \mathbb{Z}}$ there exists a full trajectory $\langle z_i \rangle_{i \in \mathbb{Z}}$ such that $\lim_{i \to \pm \infty} d(y_i, z_i) =0$." 
\end{remark}

A system $(X,f)$ is \emph{expansive} if there exists $\eta>0$ (referred to as an \emph{expansivity constant}) such that given any two distinct full trajectories $\langle x_i \rangle _{i \in \mathbb{Z}}$ and $\langle y_i \rangle _{i \in \mathbb{Z}}$ there exists $i \in \mathbb{Z}$ such that $d(x_i, y_i) \geq \eta$. In \cite{BarwellGoodOprochaRaines} the first author \emph{et al.}\ showed that an expansive map has shadowing if and only if it has s-limit shadowing. We extended that result in \cite{GoodMaciasMeddaughMitchellThomas} to show that an expansive map has shadowing if and only if it has two-sided s-limit shadowing. Combining this result with Theorem \ref{thmSLimShadICT}, we immediately obtain the following.

\begin{theorem}\label{thmShadExpansiveICT} Let $(X,f)$ be a dynamical system with shadowing. If $f$ is expansive then for any $A \in ICT_f$ there is a full trajectory $\langle x_i \rangle_{i \in \mathbb{Z}}$ such that $\alpha( \langle x_i \rangle)= \omega(\langle x_i \rangle ) =A$.
In particular $\alpha_f=\omega_f = ICT_f$.
\end{theorem}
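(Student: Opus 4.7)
My plan is to read the theorem off Theorem~\ref{thmSLimShadICT} by upgrading the shadowing hypothesis to two-sided s-limit shadowing. The key bridging fact, which the authors have invoked just prior to the statement, is that for expansive maps shadowing is equivalent to two-sided s-limit shadowing: \cite{BarwellGoodOprochaRaines} covers the one-sided version and \cite{GoodMaciasMeddaughMitchellThomas} extends the equivalence to the two-sided setting. Taking this equivalence as given, the proof is essentially a one-line deduction. The two hypotheses (shadowing and expansivity) together yield two-sided s-limit shadowing, so Theorem~\ref{thmSLimShadICT} applies verbatim and produces, for each $A \in ICT_f$, a full trajectory $\langle x_i\rangle_{i\in\mathbb{Z}}$ with $\alpha(\langle x_i\rangle) = \omega(\langle x_i\rangle) = A$. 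The global equalities $\alpha_f = \omega_f = ICT_f$ then follow by combining what we have just shown with the inclusions $\alpha_f \cup \omega_f \subseteq ICT_f$ from Lemma~\ref{LemmaAlphaOmegaAreICT}.

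All the content of the theorem therefore sits in the cited equivalence. Were that equivalence unavailable, the natural route would be as follows: fix an expansivity constant $\eta$ and $\epsilon>0$, take a shadowing modulus $\delta < \min(\epsilon,\eta/2)$, and consider a two-sided asymptotic $\delta$-pseudo-orbit $\langle y_i\rangle_{i\in\mathbb{Z}}$. Shadow the finite truncations $\langle y_i\rangle_{|i|\leq N}$ by points $z_N$, and pass to a convergent subsequence of the recentred points $f^{-N}(z_N)$ to obtain a full trajectory $\langle x_i\rangle_{i\in\mathbb{Z}}$ that $\min(\epsilon,\eta/2)$-shadows $\langle y_i\rangle$ globally. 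The subtle step, and the main obstacle in a direct proof, is the asymptotic upgrade: one must force $d(x_i, y_i) \to 0$ as $|i|\to\infty$, using expansivity to preclude the existence of a second, distinct full trajectory that would also shadow $\langle y_i\rangle$ and hence stay within $\eta$ of $\langle x_i\rangle$ forever. Since \cite{GoodMaciasMeddaughMitchellThomas} provides exactly this asymptotic upgrade, no further argument is required to conclude.
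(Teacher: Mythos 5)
Your proof is correct and is exactly the paper's argument: the authors likewise invoke the result of \cite{GoodMaciasMeddaughMitchellThomas} that an expansive map has shadowing if and only if it has two-sided s-limit shadowing, and then apply Theorem~\ref{thmSLimShadICT} directly. The additional sketch of a direct argument is not needed, since the cited equivalence carries all the weight, just as in the paper.
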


\begin{corollary}\label{CorollarySFTAlphaEqualsOmegaEqualsICT}
Let $(X, \sigma)$ be a shift of finite type (whether one- or two- sided). Then for any $A \in ICT_\sigma$ there is a full trajectory $\langle x_i \rangle_{i \in \mathbb{Z}}$ such that $\alpha( \langle x_i \rangle)= \omega(\langle x_i \rangle ) =A$.
In particular $\alpha_\sigma=\omega_\sigma = ICT_\sigma$.
\end{corollary}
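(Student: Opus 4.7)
The plan is to deduce this as a direct consequence of Theorem \ref{thmShadExpansiveICT}, by verifying that every shift of finite type, whether one-sided or two-sided, has both shadowing and expansivity. Once these two properties are in hand, the conclusion—existence of a full trajectory whose $\alpha$- and $\omega$-limit sets both coincide with a prescribed $A \in ICT_\sigma$, and hence $\alpha_\sigma = \omega_\sigma = ICT_\sigma$—follows immediately.

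Expansivity is essentially built into the topology. Endow the shift with the standard metric $d(x,y) = 2^{-k}$, where $k$ is the smallest index (or smallest $\lvert i \rvert$, in the two-sided case) at which $x$ and $y$ differ. Then any $\eta \in (0, 1]$ is an expansivity constant: if $\langle x_i \rangle_{i \in \mathbb{Z}}$ and $\langle y_i \rangle_{i \in \mathbb{Z}}$ are full trajectories with $d(x_i, y_i) < 1$ for every $i$, then $\pi_0(x_i) = \pi_0(y_i)$ for all $i$, forcing $x_i = y_i$ for all $i$ via the shift relation.

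Shadowing is the standard consequence of the finite-range constraint defining an SFT. Let $\mathcal{F}$ be a finite set of forbidden words defining the shift and let $N$ be a bound on the length of any word in $\mathcal{F}$. Given $\epsilon > 0$, pick $M$ with $2^{-M} < \epsilon$, and choose $\delta > 0$ small enough that $d(\sigma(a), b) < \delta$ forces $\sigma(a)$ and $b$ to agree on all coordinates of index at most $M + N$ (in absolute value). Given a $\delta$-pseudo-orbit $\langle x^{(i)} \rangle$, define $z$ by $\pi_i(z) = \pi_0(x^{(i)})$ for $i \geq 0$ (and analogously for $i < 0$ when the shift is two-sided, using a bi-infinite pseudo-orbit as appropriate). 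The agreement forced by the choice of $\delta$ guarantees that any length-$N$ window in $z$ already appears as a window in some $x^{(i)} \in X$, hence contains no forbidden word, so $z \in X$; the same choice yields $d(\sigma^i(z), x^{(i)}) < \epsilon$ for all $i$.

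With shadowing and expansivity established, Theorem \ref{thmShadExpansiveICT} applies verbatim to finish the proof. The only step that requires any genuine care is the construction of the shadowing point, where one must simultaneously ensure that $z$ lies in $X$ (avoidance of forbidden words) and that its orbit $\epsilon$-shadows the pseudo-orbit; this is standard and the finiteness of $\mathcal{F}$ together with its bounded word length is exactly what makes the local-to-global step work. I do not anticipate a serious obstacle beyond being careful with indexing conventions in the one-sided versus two-sided cases.
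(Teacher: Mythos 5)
Your proposal is correct and follows exactly the same route as the paper: the paper's proof simply cites Walters for the fact that shifts of finite type are precisely the shift systems with shadowing, notes that all shift spaces are expansive, and invokes Theorem \ref{thmShadExpansiveICT}. You supply explicit (and correct) proofs of the shadowing and expansivity ingredients rather than citing them, but the argument is otherwise identical.
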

\begin{proof}
Shifts of finite type are precisely the shift systems that exhibit shadowing \cite{Walters}. By Theorem \ref{thmShadExpansiveICT} it now suffices to note that all shift spaces are expansive.
\end{proof}

\begin{remark}
 Corollary \ref{CorollarySFTAlphaEqualsOmegaEqualsICT} enhances a result of Barwell \emph{et al.}\ \cite{BarwellGoodKnightRaines} who show that $ICT_\sigma = \omega_\sigma$ for shifts of finite type.
\end{remark}

\subsection{A remark on $\gamma$-limit sets}
At this point we digress from our main topic to make a brief foray into $\gamma$-limit sets. First introduced by Hero \cite{Hero} who studied them for interval maps, $\gamma$-limit sets have since been further examined by Sun \emph{et al.}\ in \cite{Sun} and \cite{Sun2} for graph maps and dendrites respectively. The $\gamma$-\emph{limit set} of a point $x$, denoted $\gamma(x)$, is defined by saying that, for any $y \in X$, $y \in \gamma (x)$ if and only if $y \in \omega(x)$ and there exists a sequence $\langle y_i\rangle_{i=1} ^\infty$ in $X$ and a strictly increasing sequence $\langle n_i\rangle_{i=1} ^\infty$ in $\mathbb{N}$ such that $f^{n_i}(y_i)=x$ for each $i$ and $\lim_{i \to \infty} y_i = y$. Note that it is possible that $\gamma(x) = \emptyset$. 
We denote by $\gamma_f$ the set of all $\gamma$-limit sets of $(X,f)$.

\begin{remark}
Whilst we have refrained from defining the $\alpha$-limit set of a point, if one were to use Hero's definition of such (see Section \ref{SectionAlphaLimitTypes}), then it would follow that $\gamma(x) = \alpha(x) \cap \omega(x)$.
\end{remark}

\begin{remark}\label{RemarkGammaLimitSetsForHomeomorphisms}
For a dynamical system $(X,f)$, if $f$ is a homeomorphism it is easy to see that, for any $x \in X$, $\gamma(x)= \alpha(\langle x_i \rangle ) \cap \omega(x)$, where $\langle x_i \rangle _{i \leq 0}$ is the unique backward trajectory of $x$.
\end{remark}





Unlike $\alpha$- and $\omega$- limit sets, $\gamma$-limit sets 
are not necessarily internally chain transitive. The example below demonstrates this.

\begin{example}\label{ExampleGammaLimitSetNotICT}
Let $(X, \sigma)$ be the full two-sided shift with alphabet $\{0,1,2\}$. Consider the point $x$:
\[ x= \ldots 0^n1^n 0^{n-1}1^{n-1}\ldots 0^2 1^ 201\cdot 0^221^220^321^3 \ldots 0^n 2 1^n \ldots .\]
Then $\gamma(x)$ is not internally chain transitive.
\end{example}
In Example \ref{ExampleGammaLimitSetNotICT}, let $\langle x_i \rangle _{i \leq 0}$ be the unique backward trajectory of $x$. By Theorem \ref{thmLimitSetsInTwoSidedShiftSpace} we can observe that:
\[ \alpha(\langle x_i \rangle)= \{0^\infty, 1^\infty, \sigma^n(0^\infty \cdot 1^\infty) \mid n \in \mathbb{Z}\},\]
\[ \omega(x)= \{0^\infty, 1^\infty, \sigma^n(0^\infty 2\cdot 1^\infty) \mid n \in \mathbb{Z}\}.\]
Since $\sigma$ is a homeomorphism, by Remark \ref{RemarkGammaLimitSetsForHomeomorphisms},
\[\gamma(x)= \{0^\infty, 1^\infty\}.\]
It is obvious that $\gamma(x)$ is not internally chain transitive.

\medskip

Example \ref{ExampleGammaLimitSetNotICT} notwithstanding, every $\gamma$-limit set is closed and contained in a single \emph{chain component} of the dynamical system, i.e.\ for each $\epsilon>0$ and for all $a,b\in\gamma(x)$ there is an $\epsilon$-chain from $a$ to $b$ in $X$ (as opposed to in $\gamma(x)$).

\begin{proposition}
Let $(X,f)$ be a dynamical system. For any $x \in X$, $\gamma(x)$ is closed and contained in a single chain component of $(X,f)$.
\end{proposition}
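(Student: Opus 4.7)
The plan is to handle the two claims separately. The containment in a single chain component is essentially automatic from what has already been established, while closedness will require a diagonal construction.

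For the chain-component claim, I would simply note that by definition $\gamma(x) \subseteq \omega(x)$, and by Lemma \ref{LemmaAlphaOmegaAreICT} we have $\omega(x) \in ICT_f$. Thus for any $a,b \in \gamma(x) \subseteq \omega(x)$ and any $\epsilon>0$ there is a finite $\epsilon$-chain from $a$ to $b$ lying inside $\omega(x) \subseteq X$ (and symmetrically from $b$ to $a$), so $a$ and $b$ lie in the same chain component of $(X,f)$. Hence $\gamma(x)$ is contained in a single chain component.

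For closedness, take a sequence $\langle y^{(k)} \rangle_{k \geq 1}$ in $\gamma(x)$ with $y^{(k)} \to y \in X$, and aim to show $y \in \gamma(x)$. First, $y \in \omega(x)$ because $\omega(x)$ is closed. Next, for each $k$, fix a sequence $\langle y^{(k)}_j \rangle_{j \geq 1}$ in $X$ and a strictly increasing sequence $\langle n^{(k)}_j \rangle_{j \geq 1}$ in $\mathbb{N}$ with $f^{n^{(k)}_j}(y^{(k)}_j) = x$ and $y^{(k)}_j \to y^{(k)}$ as $j \to \infty$. I would then build the required witness sequences for $y$ by diagonalisation: choose indices $j_k$ inductively so that simultaneously $d(y^{(k)}_{j_k}, y^{(k)}) < 1/k$ and $n^{(k)}_{j_k} > n^{(k-1)}_{j_{k-1}}$ (the second condition is achievable because $\langle n^{(k)}_j \rangle_j$ is strictly increasing in $j$). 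Setting $z_k := y^{(k)}_{j_k}$ and $m_k := n^{(k)}_{j_k}$ gives $f^{m_k}(z_k) = x$, the sequence $\langle m_k \rangle$ is strictly increasing, and
\[
d(z_k, y) \leq d(y^{(k)}_{j_k}, y^{(k)}) + d(y^{(k)}, y) < \tfrac{1}{k} + d(y^{(k)}, y) \to 0,
\]
so $z_k \to y$, witnessing $y \in \gamma(x)$.

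Neither step poses much of an obstacle: the chain-component part is a one-line consequence of $\gamma(x) \subseteq \omega(x)$ together with internal chain transitivity of $\omega$-limit sets, and the only mildly delicate point in the closedness argument is the bookkeeping to ensure the diagonally chosen exponents $m_k$ remain strictly increasing, which is immediate from the strict monotonicity of each $\langle n^{(k)}_j \rangle_j$.
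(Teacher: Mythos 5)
Your proof is correct. The closedness argument is essentially the paper's: both pass to $\omega(x)$ (closed) to get $y \in \omega(x)$, then diagonalise over the witness sequences, with the only care point being that the chosen exponents remain strictly increasing; your bookkeeping with $n^{(k)}_{j_k} > n^{(k-1)}_{j_{k-1}}$ is just a more explicit version of what the paper does. The chain-component argument, however, takes a genuinely different route. You observe that $\gamma(x) \subseteq \omega(x)$ and invoke Lemma \ref{LemmaAlphaOmegaAreICT} to get internal chain transitivity of $\omega(x)$, so any two points of $\gamma(x)$ are joined by $\epsilon$-chains already inside $\omega(x)$. The paper instead builds an explicit $\delta$-chain $\langle a, f(y), \ldots, f^n(y)=x, f(x), \ldots, f^{m-1}(x), b\rangle$, using the preimage condition in the definition of $\gamma(x)$ to find $y$ with $f^n(y)=x$ and $f(y)$ close to $f(a)$, and the condition $b \in \omega(x)$ to land near $b$ along the forward orbit of $x$. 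Your argument is shorter and in fact proves the stronger statement that all of $\omega(x)$ lies in one chain component (of which the claim for $\gamma(x)$ is a special case), at the cost of importing the Hirsch et al.\ result; the paper's construction is self-contained, uses the full definition of $\gamma(x)$, and exhibits a chain that passes through the orbit of $x$ itself. Both are valid; note only that your chain-component step silently covers the case $\gamma(x)=\emptyset$ vacuously, which the paper makes explicit.
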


\begin{proof}
If $\gamma(x) = \emptyset$ then the closedness holds and chain transitivity is vacuous.

Let $a,b \in \gamma(x)$. Let $\delta>0$ be given. Let $y \in X$ be such that $d(f(y), f(a)) < \delta$ and there exists $n > 1$ such that $f^n(y)=x$: such a point exists by the continuity of $f$ combined with the fact that $a \in \gamma(x)$. Now let $m \in \mathbb{N}$ be such that $d(f^m(x), b) < \delta$. It follows that $\langle a, f(y), f^2(y) \ldots , f^n(y)=x, f(x), f^2(x),\ldots , f^{m-1}(x), b\rangle$ is a $\delta$-chain from $a$ to $b$.

Now suppose $z \in \overline{\gamma(x)}$. Then there is a sequence $\langle y_i \rangle _{i=1} ^\infty$ in $\gamma(x)$ such that $\lim_{i \to \infty} y_i =z$. Note that, since $\omega(x)$ is closed and $y_i \in \omega(x)$ for each $i$ it follows that $z \in \omega(x)$. Now, for each $i \in \mathbb{N}$, let $z_i \in B_{\nicefrac{1}{i}}(y_i)$ and $n_i \in \mathbb{N}$ be such that $f^{n_i}(z_i)=x$ and $\langle n_i \rangle _{i=1} ^\infty$ is an increasing sequence. Then, as  $\lim_{i \to \infty} z_i =z$, it follows that $z \in \gamma(x)$.
\end{proof}

Using theorems \ref{thmSLimShadICT} and \ref{thmShadExpansiveICT} we obtain the following corollaries concerning the nonempty closed internally chain transitive sets in systems with two-sided s-limit shadowing.


\begin{corollary}
If $(X,f)$ is a system with two-sided s-limit shadowing then $ICT_f \subseteq \gamma_f$.
\end{corollary}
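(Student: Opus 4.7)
The plan is to use Theorem \ref{thmSLimShadICT} directly: given $A \in ICT_f$, two-sided s-limit shadowing provides a full trajectory $\langle x_i \rangle_{i \in \mathbb{Z}}$ with $\alpha(\langle x_i \rangle) = \omega(\langle x_i \rangle) = A$. I would then set $x := x_0$ and show that $\gamma(x) = A$, which is enough to conclude $A \in \gamma_f$.

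The containment $\gamma(x) \subseteq A$ is automatic from the definition, since $\gamma(x) \subseteq \omega(x) = \omega(\langle x_i \rangle_{i \geq 0}) = A$. For the reverse containment, the key observation is that the negative half of the full trajectory furnishes exactly the preimage data required by the definition of a $\gamma$-limit set. Given $y \in A = \alpha(\langle x_i \rangle)$, pick a strictly decreasing sequence of negative integers $i_1 > i_2 > i_3 > \cdots$ with $x_{i_k} \to y$. Setting $y_k := x_{i_k}$ and $n_k := -i_k$, the sequence $\langle n_k \rangle$ is strictly increasing in $\mathbb{N}$, and since $\langle x_i \rangle_{i \in \mathbb{Z}}$ is a genuine full orbit, $f^{n_k}(y_k) = f^{-i_k}(x_{i_k}) = x_0 = x$. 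Together with $y \in \omega(x)$ this shows $y \in \gamma(x)$, establishing $A \subseteq \gamma(x)$.

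There is no real obstacle here; the work has already been done in Theorem \ref{thmSLimShadICT}. The only point worth being careful about is the bookkeeping that the $n_k$ must be \emph{strictly} increasing and positive, which is ensured by taking the $i_k$ strictly decreasing and negative (which can always be arranged, since $A$ is the set of accumulation points of the negative tail). Once $\gamma(x) = A$ is established, the corollary follows immediately.
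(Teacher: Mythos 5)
Your proof is correct and follows essentially the same route as the paper: both invoke Theorem \ref{thmSLimShadICT} to get a full trajectory with $\alpha(\langle x_i \rangle)=\omega(x_0)=A$ and then observe that the backward half of the trajectory supplies exactly the preimage data in the definition of $\gamma(x_0)$. You simply spell out the bookkeeping ($y_k=x_{i_k}$, $n_k=-i_k$) that the paper leaves implicit.
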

\begin{proof}
Let $A \in ICT_f$. By Theorem \ref{thmSLimShadICT} there is a full trajectory $\langle x_i \rangle _{i \in \mathbb{Z}}$ through $x_0=x$ such that $\alpha( \langle x_i \rangle )= \omega(x)= A$. Notice that $\gamma(x) \subseteq \omega(x)$ by definition. Since $\alpha(\langle x_i \rangle)=\omega(x)$, and $\langle x_i \rangle _{i\leq 0}$ is a backward trajectory of $x$, it follows that $\gamma(x)=A$. Hence $ICT_f \subseteq \gamma_f$.
\end{proof}

\begin{corollary}\label{CorollaryShadExpansiveGammaICT}
If $(X,f)$ is an expansive system with shadowing then $ICT_f \subseteq \gamma_f$.
\end{corollary}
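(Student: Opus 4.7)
The plan is to piggyback on Theorem \ref{thmShadExpansiveICT}, which already hands us, for each $A \in ICT_f$, a full trajectory $\langle x_i\rangle_{i \in \mathbb{Z}}$ satisfying $\alpha(\langle x_i\rangle) = \omega(x_0) = A$. Writing $x = x_0$, the task reduces to showing $\gamma(x) = A$, which would immediately place $A$ in $\gamma_f$.

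The inclusion $\gamma(x) \subseteq A$ is free, since $\gamma(x) \subseteq \omega(x) = A$ by the very definition of the $\gamma$-limit set. For the reverse inclusion, I would take an arbitrary $y \in A$ and exploit the equality $A = \alpha(\langle x_i\rangle)$: this produces a strictly decreasing sequence of negative indices $i_1 > i_2 > \cdots$ with $x_{i_n} \to y$. Setting $y_n := x_{i_n}$ and $n_k := -i_k$, the relations $f^{n_k}(y_k) = x$ (because $\langle x_i\rangle_{i \leq 0}$ is a backward trajectory of $x$), the strict increase of $\langle n_k\rangle$, and the convergence $y_k \to y$ verify the definition of $\gamma(x)$ for $y$; the additional requirement $y \in \omega(x)$ is already in hand since $\omega(x) = A \ni y$. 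Hence $A \subseteq \gamma(x)$, completing the equality $\gamma(x) = A$.

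There is no real obstacle: once Theorem \ref{thmShadExpansiveICT} is invoked, the data guaranteeing $y \in \alpha(\langle x_i\rangle)$ along a backward trajectory of $x$ is precisely the witness data required by the definition of $\gamma(x)$. Alternatively, one can observe that the result follows immediately from the preceding corollary combined with the paper's earlier remark that, for expansive maps, shadowing is equivalent to two-sided s-limit shadowing; I would likely include a one-line version of this alternative route alongside the direct argument above for completeness.
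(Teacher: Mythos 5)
Your argument is correct and is essentially the paper's own route: the paper states this corollary without a separate proof, deriving it from Theorem \ref{thmShadExpansiveICT} (equivalently, from the preceding corollary together with the fact that expansivity plus shadowing gives two-sided s-limit shadowing). Your explicit check that the witness data for $y \in \alpha(\langle x_i \rangle)$ along the backward half of the full trajectory is precisely the witness data required for $y \in \gamma(x)$ is the same computation the paper carries out in the proof of the preceding corollary.
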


\section{Characterising $\overline{\alpha_f}=ICT_f$ and $\alpha_f=ICT_f$}\label{SectionCharacterise}

In \cite{GoodMeddaugh2016} the authors characterise systems for which $\overline{\omega_f}=ICT_f$ and $\omega_f=ICT_f$ in terms of novel shadowing properties. In this section we show that the natural backward analogues of these shadowing properties characterise when $\overline{\alpha_f}=ICT_f$ and $\alpha_f=ICT_f$. We also demonstrate by way of examples that, in contrast to the shadowing property, there is no general entailment between the backward and forward versions of these types of shadowing. 

In \cite{GoodMeddaugh2016} it is shown that the property of $\overline{\omega_f}=ICT_f$ is characterised by a variation on shadowing the authors term \emph{cofinal orbital shadowing}. A system $f \colon X \to X$ has the cofinal orbital shadowing property if for all $\epsilon>0$ there exists $\delta>0$ such that for any $\delta$-pseudo-orbit $\langle x_i \rangle _{i = 0} ^\infty$ there exists a point $ z \in X$ such that for any $K \in \mathbb{N}$ there exists $N \geq K$ such that 
\[d_H(\overline{\{f^{N+i}(z)\}_{i =0} ^\infty} , \overline{\{x_{N+i}\}_{i = 0} ^\infty})<\epsilon.\]
The authors additionally demonstrate that this form of shadowing is equivalent to one which seems \emph{prima facie} stronger: the \emph{eventual strong orbital shadowing property}. A system $f \colon X \to X$ has the eventual strong orbital shadowing property if for all $\epsilon>0$ there exists $\delta>0$ such that for any $\delta$-pseudo-orbit $\langle x_i \rangle _{i = 0} ^\infty$ there exists a point $z \in X$ and $K \in \mathbb{N}$ such that 
\[d_H(\overline{\{f^{N+i}(z)\}_{i = 0} ^\infty} , \overline{\{x_{N+i}\}_{i = 0} ^\infty})<\epsilon\]
for all $N \geq K$.

\begin{definition}
A system $f \colon X \to X$ has the \emph{backward cofinal orbital shadowing property} if for all $\epsilon>0$ there exists $\delta>0$ such that for any backward $\delta$-pseudo-orbit $\langle x_i \rangle _{i \leq 0}$ there exists a backward trajectory $\langle z_i \rangle _{i\leq 0}$ such that for any $K \in \mathbb{N}$ there exists $N \geq K$ such that 
\[d_H(\overline{\{ z_{i-N}\} _{i\leq 0}}, \overline{\{x_{i-N}\}_{i \leq 0}})<\epsilon.\]
\end{definition}

\begin{definition}
A system $f \colon X \to X$ has the \emph{backward eventual strong orbital shadowing property} if for all $\epsilon>0$ there exists $\delta>0$ such that for any backward $\delta$-pseudo-orbit $\langle x_i \rangle _{i \leq 0}$ there exists a backward trajectory $\langle z_i \rangle _{i\leq 0}$ and there exists $K \in \mathbb{N}$ such that 
\[d_H(\overline{\{ z_{i-N}\} _{i\leq 0}}, \overline{\{x_{i-N}\}_{i \leq 0}})<\epsilon\]
for all $N \geq K$.
\end{definition}

\begin{theorem}\label{thmCharClosureAlphaEqualsICT}
Let $(X,f)$ be a dynamical system. The following are equivalent: 
\begin{enumerate}
    \item $f$ has the backward cofinal orbital shadowing property;
    \item $f$ has the backward eventual strong orbital shadowing property;
    \item $\overline{\alpha_f} = ICT_f$.
\end{enumerate}
\end{theorem}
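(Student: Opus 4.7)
The plan is to prove the cycle $(2) \Rightarrow (1) \Rightarrow (3) \Rightarrow (2)$. The implication $(2) \Rightarrow (1)$ is immediate from the definitions: an eventual $K$ that works for all $N \geq K$ certainly witnesses the cofinal property.

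For $(1) \Rightarrow (3)$, the containment $\overline{\alpha_f} \subseteq ICT_f$ follows from Lemmas \ref{LemmaAlphaOmegaAreICT} and \ref{lemmaICTclosed}. For the reverse, fix $A \in ICT_f$ and $\epsilon > 0$, and let $\delta$ be given by the backward cofinal orbital shadowing property at scale $\epsilon/3$. Mirroring the construction in the proof of Theorem \ref{thmShadICT} (using only the backward half), build a backward $\delta$-pseudo-orbit $\langle a_i \rangle_{i \leq 0}$ in $A$ with
\[
A = \bigcap_{N \geq 0} \overline{\{a_j \mid j \leq -N\}},
\]
so that its tails converge to $A$ in Hausdorff distance. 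Apply backward cofinal orbital shadowing to extract a backward trajectory $\langle z_i \rangle_{i \leq 0}$ whose tails lie within $\epsilon/3$ of the pseudo-orbit tails for arbitrarily large $N$. Since the tails of $\langle z_i \rangle$ also converge (down to $\alpha(\langle z_i \rangle)$), selecting $N$ large enough and applying the triangle inequality yields $d_H(\alpha(\langle z_i \rangle), A) < \epsilon$, so $A$ is a Hausdorff limit of elements of $\alpha_f$.

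For $(3) \Rightarrow (2)$, I would argue by contradiction. If eventual strong orbital shadowing fails at some $\epsilon > 0$, then for each $n \in \mathbb{N}$ there is a backward $1/n$-pseudo-orbit $\langle x^{(n)}_i \rangle_{i \leq 0}$ that no backward trajectory eventually strongly orbital $\epsilon$-shadows. Set $L_n = \bigcap_{N} \overline{\{x^{(n)}_j \mid j \leq -N\}}$. By compactness of $(2^X, d_H)$, pass to a subsequence with $L_n \to A$ for some $A \in 2^X$. The crux is to verify that $A \in ICT_f$: given $a, b \in A$ and $\eta > 0$, for $n$ large one finds indices $j < i$ deep in the tail with $x^{(n)}_j$ close to $a$, $x^{(n)}_i$ close to $b$, and the interpolating $1/n$-chain $x^{(n)}_j, \ldots, x^{(n)}_i$ contained in $B_\eta(A)$. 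Projecting each point onto its nearest neighbour in $A$ and invoking the uniform continuity of $f$ converts this into an $\eta'$-chain strictly inside $A$, with $\eta' \to 0$ as $n \to \infty$ and $\eta \to 0$. Once $A \in ICT_f$, hypothesis (3) supplies $\langle z_i \rangle_{i \leq 0}$ with $d_H(\alpha(\langle z_i \rangle), A) < \epsilon/4$. A four-term triangle inequality through $L_n$, $A$, and $\alpha(\langle z_i \rangle)$ then shows that for all sufficiently large $N$ the tails of $\langle z_i \rangle$ and $\langle x^{(n)}_i \rangle$ agree to within $\epsilon$, contradicting the choice of $\langle x^{(n)}_i \rangle$.

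The main obstacle is the verification that the Hausdorff limit $A$ is internally chain transitive. The sets $L_n$ themselves need not lie in $ICT_f$ for fixed $n$ (the pseudo-orbit points forming the would-be chains typically sit outside $L_n$), so one cannot merely appeal to the closedness of $ICT_f$. The argument has to exploit simultaneously the vanishing of the chain parameter $1/n$ and the Hausdorff convergence $L_n \to A$, leveraging uniform continuity of $f$ to deform near-$A$ chains into on-$A$ chains; getting the bookkeeping right so that a single $A$ suffices for every $a,b \in A$ and every $\eta$ is the only genuinely delicate step.
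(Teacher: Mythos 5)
Your proposal is correct and follows essentially the same route as the paper: the same cycle $(2)\Rightarrow(1)\Rightarrow(3)\Rightarrow(2)$, the same reuse of the pseudo-orbit construction from Theorem \ref{thmShadICT} for $(1)\Rightarrow(3)$, and the same compactness-plus-uniform-continuity argument showing that the Hausdorff limit $A$ of the $\alpha$-limit sets of the witnessing pseudo-orbits is internally chain transitive before deriving the contradiction. The delicate step you flag (deforming near-$A$ chains into chains inside $A$) is exactly the one the paper carries out in detail.
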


\begin{proof}
From the definitions it is easy to see that $(2) \implies (1)$. We will show $(1) \implies (3)$ and that $(3) \implies (2)$.

Suppose that $f$ has the backward cofinal orbital shadowing property. Recall that $\overline{\alpha_f} \subseteq ICT_f$, hence it will suffice to show $ICT_f \subseteq \overline{\alpha_f}$. Let $A \in ICT_f$. Let $\epsilon>0$ be given. It will suffice to show there exists $B \in \alpha_f$ with $d_H(A,B) < \epsilon$. Let $\delta>0$ correspond to $\nicefrac{\epsilon}{2}$ for cofinal orbital shadowing. Now, follow the construction of the sequence $\langle a_i \rangle _{i \in \mathbb{Z}}$ in Theorem \ref{thmShadICT} (but for $\nicefrac{\epsilon}{2}$ and $\delta$ as here) and let $x_i=a_i$ for all $i\leq 0$. Recall that this means
\[ A= \alpha(\langle x_i \rangle _{i \leq 0}).\]
Let $\langle z_i \rangle _{i\leq 0}$ be given by backward cofinal orbital shadowing so that for any $K \in \mathbb{N}$ there exists $N \geq K$ such that
\[d_H(\overline{\{ z_{i-N}\} _{i\leq 0}}, \overline{\{x_{i-N}\}_{i \leq 0}})<\nicefrac{\epsilon}{2}.\]
Notice that in particular this means that
\[ d_H(\alpha(\langle x_i \rangle),\alpha(\langle z_i \rangle))< \epsilon.\]
Since $\alpha(\langle x_i \rangle _{i \leq 0})= A$ it follows that $A \in \overline{\alpha_f}$.

\medskip

Now suppose that $(X,f)$ does not have backward eventual strong orbital shadowing and let $\epsilon>0$ witness this. (We will show $ICT_f \neq \alpha_f$.) This means that for each $n \in \mathbb{N}$ there is a backward $\nicefrac{1}{2^n}$-pseudo-orbit $\langle x^n _i \rangle _{i \leq 0}$ such that for any backward orbit $\langle z_i \rangle _{i\leq 0}$ and any $K \in \mathbb{N}$ there exists $N \geq K$ with
\[d_H(\overline{\{ z_{i-N}\} _{i\leq 0}}, \overline{\{x_{i-N}\}_{i \leq 0}})\geq \epsilon.\]
It follows that, in particular, for each backward orbit $\langle z_i \rangle _{i\leq 0}$ and any $n \in \mathbb{N}$
\begin{equation}\label{eqn1ThmcharClosure} d_H(\alpha(\langle z_i \rangle),\alpha(\langle x^n _i \rangle) )\geq \nicefrac{\epsilon}{2}. \end{equation}
For each $n \in \mathbb{N}$ let $A_n =\alpha(\langle x^n_i \rangle _{i \leq 0})$. The sequence of compact sets $\langle A_n \rangle _{n \in \mathbb{N}}$ has a convergent subsequence which converges in the hyperspace $2^X$. Without loss of generality we may assume the sequence itself is convergent; let $A$ be its limit. We claim $A \in ICT_f$ but that $A \notin \overline{\alpha_f}$.

Let $a,b \in A$ and let $\xi >0$ be arbitrary. By the uniform continuity of $f$, there exists $\eta >0$ such that for any $x,y \in X$ if $d(x,y) < \eta$ then $d(f(x),f(y)) < \nicefrac{\xi}{2}$. Without loss of generality take $\eta < \nicefrac{\xi}{2}$. Let $M \in \mathbb{N}$ be such that $\nicefrac{1}{2^M}<\nicefrac{\eta}{3}$ and $d_H(A_M,A)<\nicefrac{\eta}{3}$. Now take $K \in \mathbb{N}$ such that
\[d_H(\overline{\{x^M_{i-K}\}_{i\leq 0}}, A_M) <\nicefrac{\eta}{3}.\]
Thus
\[d_H(\overline{\{x^M_{i-K}\}_{i\leq 0}}, A) <\nicefrac{2\eta}{3}.\]
Let $m \in \mathbb{N}$ be such that $d(x^M_{-m-K}, b)< \nicefrac{2\eta}{3}$ and let $l>m$ be such that $d(x^M_{-l-K}, a)< \nicefrac{2\eta}{3}$. Let $y_0=a$ and $y_{l-m}=b$. For each $j \in {1, \ldots, l-m-1}$ pick $y_j \in A$ with $d(y_j, x^M _{-l-K+j})< \nicefrac{2\eta}{3}$. We claim $\langle y_0, y_1 , \ldots , y_{l-k} \rangle$ is a $\xi$-chain from $a$ to $b$. Indeed, for $j \in \{0, \ldots , l-m-1\}$
\begin{align*}
    d(f(y_j),y_{j+1}) &\leq d(f(y_j), f(x^M _{-l-K+j}))+ d(f(x^M _{-l-K+j}), x^M _{-l-K+j +1}) 
    \\ &\text{      }\hspace{0.3cm}+ d(x^M _{-l-K+j +1}, y_{j+1})
    \\ &\leq \nicefrac{\xi}{2} + \nicefrac{1}{2^M} + \nicefrac{2\eta}{3}
    \\ &\leq \nicefrac{\xi}{2} + \nicefrac{\eta}{3} + \nicefrac{2\eta}{3}
    \\ & \leq \xi.
\end{align*}
Since $a$ and $b$ were chosen arbitrarily in $A$ we have that $A$ is internally chain transitive. Thus, since $A$ is nonempty and closed, $A \in ICT_f$.

Suppose for a contradiction that $A \in \overline{\alpha_f}$. Then there exists a backward trajectory $\langle z_i \rangle _{i\leq 0} \in X$ such that $d_H(\alpha(\langle z_i \rangle _{i\leq 0}), A) < \nicefrac{\epsilon}{4}$. Let $M \in \mathbb{N}$ be such that $d_H(A_M, A) < \nicefrac{\epsilon}{4}$. Then $d_H(\alpha(\langle z_i \rangle _{i\leq 0}), A_M) < \nicefrac{\epsilon}{2}$, which contradicts Equation \ref{eqn1ThmcharClosure}. Therefore $A \in ICT_f \setminus \overline{\alpha_f}$. Thus $\overline{\alpha_f}\neq ICT_f$.
\end{proof}

\begin{remark}
Unlike with shadowing (see Lemma \ref{lemmaBackwardsShadowing}), none of the shadowing properties in Theorem \ref{thmCharClosureAlphaEqualsICT} imply their forward analogues (nor vice-versa). To see this, by Theorem \ref{thmCharClosureAlphaEqualsICT} and \cite[Theorem 13]{GoodMeddaugh2016}, it suffices to give an example where $\alpha_f=ICT_f$ but $\overline{\omega_f} \neq ICT_f$ and an example where $\omega_f=ICT_f$ but $\overline{\alpha_f} \neq ICT_f$. Examples \ref{Example1} and \ref{Example2} provide this.
\end{remark}

\begin{example}\label{Example1} Let
$x=1010^210^3\ldots$.
Take
\[ X = \overline{\Orb^+ _\sigma(x) \cup \{ 0^nx \mid n \in \mathbb{N}\}} ,\]
where the closure is taken with regard to the one-sided full shift on the alphabet $\{0,1\}$, and consider the system $(X,\sigma)$. Then $ICT_\sigma=\omega_\sigma \neq \overline{\alpha_\sigma}$.
\end{example} 
In Example \ref{Example1}, $\omega(x)=\{0^\infty, 0^n10^\infty \mid n \geq 0\}$. It is easy to see that the only other $\omega$-limit set is $\{0^\infty\}$. Thus \[\omega_\sigma=\{\{0^\infty\}, \{0^\infty, 0^n10^\infty \mid n \geq 0\}\}.\] Meanwhile \[\alpha_\sigma=\{\{0^\infty\}\}.\] Observe that $\overline{\alpha_\sigma}=\alpha_\sigma$. Finally $ICT_\sigma=\omega_\sigma \neq \overline{\alpha_\sigma}$. Hence the system has cofinal orbital shadowing and eventual strong orbital shadowing by \cite[Theorem 13]{GoodMeddaugh2016} but the system does not have their backward analogues by Theorem \ref{thmCharClosureAlphaEqualsICT}.

\begin{example}\label{Example2}
Take
\[ X = \overline{\{ \sigma^k(10^n10^{n-1}\ldots 10^{2}10^\infty) \mid k,n \in \mathbb{N}\}},\]
where the closure is taken with regard to the one-sided full shift on the alphabet $\{0,1\}$, and consider the system $(X,\sigma)$. Then $ICT_\sigma=\alpha_\sigma \neq \overline{\omega_\sigma}$.
\end{example}
In Example \ref{Example2} it is easily observed that \[\omega_\sigma=\{\{0^\infty\}\}.\] Meanwhile \[\alpha_\sigma=\{\{0^\infty\}, \{ 0^\infty, 0^n10^\infty \mid n \geq 0\}\}.\] Observe that $\overline{\omega_\sigma}=\omega_\sigma$. Finally $ICT_\sigma=\alpha_\sigma \neq \overline{\omega_\sigma}$. Hence the system $(X,\sigma)$ has backward cofinal orbital shadowing and backward eventual strong orbital shadowing by Theorem \ref{thmCharClosureAlphaEqualsICT} but it does not have their forward analogues by \cite[Theorem 13]{GoodMeddaugh2016}.

\medskip

In \cite[Theorem 22]{GoodMeddaugh2016} the authors show that the property of $\omega_f=ICT_f$ is characterised by several equivalent asymptotic variants of shadowing: These are \emph{asymptotic orbital shadowing}, \emph{asymptotic strong orbital shadowing} and \emph{orbital limit shadowing}. The system $(X,f)$ has then has the \emph{asymptotic orbital shadowing} property if for any asymptotic pseudo-orbit $\langle x_i \rangle_{i\geq 0}$ there exists a point $z\in X$ such that for any $\epsilon>0$ there exists $N \in \mathbb{N}$ such that 
\[
		d_H(\overline{\{x_{N+i}\}_{i\geq 0}},\overline{\{f^{N+i}(z)\}_{i\geq 0}})<\epsilon.
\]
The system has the \emph{asymptotic strong orbital shadowing} property if for any asymptotic pseudo-orbit $\langle x_i \rangle_{i\geq 0}$ there exists a point $z\in X$ such that for any $\epsilon>0$ there exists $K \in \mathbb{N}$ such that 
\[
		d_H(\overline{\{x_{N+i}\}_{i\geq 0}},\overline{\{f^{N+i}(z)\}_{i\geq 0}})<\epsilon
\]
for all $N \geq K$. Finally, the system has the \emph{orbital limit shadowing} property, as introduced by Pilyugin \cite{Pilyugin2007}, if for any asymptotic pseudo-orbit $\langle x_i \rangle_{i\geq 0}$ there exists a point $z\in X$ such that $\omega(z) = \omega(\langle x_i \rangle).$

Before characterising $\omega_f=ICT_f$ by these notions of shadowing, the authors \cite{GoodMeddaugh2016} note that \emph{asymptotic shadowing}, also known as limit shadowing, is sufficient but not necessary for $\omega_f=ICT_f$: a system has asymptotic shadowing if for each asymptotic pseudo-orbit $\langle x_i \rangle _{i \geq 0}$ there exists a point $z \in X$ such that 
\[ \lim_{ i \rightarrow \infty} d(f^i(z), x_i) =0.\]
As with other shadowing variants, asymptotic shadowing has a backward analogue.
\begin{definition}
A system $f \colon X \to X$ has the \emph{backward asymptotic shadowing property} if for each backward asymptotic pseudo-orbit $\langle x_i \rangle _{i \leq 0}$ there exists a backward trajectory $\langle z_i \rangle _{i \leq 0}$ such that 
\[ \lim_{ i \rightarrow -\infty} d(z_i, x_i) =0.\]
\end{definition}
We shall see (Corollary \ref{CorollaryBackAsymSHad}) that backward asymptotic shadowing is sufficient for $\alpha_f=ICT_f$, however it is not necessary. The irrational rotation of the circle satisfies $\alpha_f=ICT_f$ (as a minimal map, both are equal to \{$X$\}) however it fails to have backward asymptotic shadowing. To see this one can observe that for any irrational rotation $f$ of the circle, the inverse function $f^{-1}$ is also an irrational rotation of the circle. It thereby suffices to note that no irrational rotation of the circle has asymptotic shadowing \cite{Pilyugin2007}.


\begin{definition}
A system $f \colon X \to X$ has the \emph{backward asymptotic orbital shadowing property} if for each backward asymptotic pseudo-orbit $\langle x_i \rangle _{i \leq 0}$ there exists a backward trajectory $\langle z_i \rangle _{i \leq 0}$ such that for any $\epsilon>0$ there exists $N \in \mathbb{N}$ such that 
\[ d_H(\overline{\{z_{i-N}\}_{i \leq 0}}, \overline{\{x_{i-N}\}_{i \leq 0}}) <\epsilon.\]
\end{definition}

\begin{definition}
A system $f \colon X \to X$ has the \emph{backward asymptotic strong orbital shadowing property} if for each backward asymptotic pseudo-orbit $\langle x_i \rangle _{i \leq 0}$ there exists a backward trajectory $\langle z_i \rangle _{i \leq 0}$ such that for any $\epsilon>0$ there exists $K \in \mathbb{N}$ such that 
\[ d_H(\overline{\{z_{i-N}\}_{i \leq 0}}, \overline{\{x_{i-N}\}_{i \leq 0}}) <\epsilon\]
for all $N \geq K$.
\end{definition}

The following is a backward version of the orbital limit shadowing property, studied by Pilyugin \emph{et al.}\ \cite{Pilyugin2007}.

\begin{definition}
A system $f \colon X \to X$ has the \emph{backward orbital limit shadowing property} if for each backward asymptotic pseudo-orbit $\langle x_i \rangle _{i \leq 0}$ there exists a backward trajectory $\langle z_i \rangle _{i \leq 0}$ such that 
\[\alpha(\langle z_i \rangle)=\alpha(\langle x_i \rangle).\]
\end{definition}

As mentioned previously, Hirsch \emph{et al.}\ \cite{Hirsch} showed that the $\alpha$-limit set (resp.\ $\omega$-limit set) of any backward (resp.\ forward) pre-compact trajectory is internally chain transitive. In the same paper, the authors show that the $\omega$-limit set of any pre-compact asymptotic pseudo-orbit is internally chain transitive \cite[Lemma 2.3]{Hirsch}. Whilst we omit the proof, the same is true of pre-compact backward asymptotic pseudo-orbits. We formulate this as Lemma \ref{LemmaOmegaAlphaAsymPseudoAreICT} below.


\begin{lemma}\textup{\cite{Hirsch}}\label{LemmaOmegaAlphaAsymPseudoAreICT}
Let $(X,f)$ be a dynamical system where $X$ is a (not necessarily compact) metric space. The $\alpha$-limit set (resp.\ $\omega$-limit set) of any backward (resp.\ forward) pre-compact asymptotic pseudo-orbit is internally chain transitive. In particular, when $X$ is compact, all such limit sets are in $ICT_f$.
\end{lemma}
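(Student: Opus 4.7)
The forward ($\omega$) assertion is the content of the cited Hirsch \emph{et al.}\ result, so the plan is to prove the backward ($\alpha$) analogue by mimicking their argument with the obvious time-reversed bookkeeping. Let $\langle x_i\rangle_{i\leq 0}$ be a pre-compact backward asymptotic pseudo-orbit, and set $A=\alpha(\langle x_i\rangle)$. Write $K$ for the compact closure of $\{x_i:i\leq 0\}$ in $X$; then $A\subseteq K$, $A$ is nonempty and closed by pre-compactness, and $f|_K$ is uniformly continuous because $K$ is compact and $f$ is continuous on $X$. All the work is in checking internal chain transitivity.

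Fix $a,b\in A$ and $\epsilon>0$, and aim to construct a finite $\epsilon$-chain from $a$ to $b$ lying in $A$. First choose $\eta\in(0,\epsilon/3)$ so that, for all $u,v\in K$ with $d(u,v)<\eta$, one has $d(f(u),f(v))<\epsilon/3$. The backward asymptotic pseudo-orbit hypothesis yields $T_1\leq 0$ with $d(f(x_i),x_{i+1})<\epsilon/3$ for every $i\leq T_1$. The second, and main, auxiliary observation is that for any $\nu>0$ only finitely many $x_i$ can lie outside $B_\nu(A)$: otherwise a sequence of such bad $x_i$ with indices tending to $-\infty$ would, by pre-compactness, have a subsequential limit, which by definition of $A$ must lie in $A$ and yet be at distance $\geq\nu$ from $A$, a contradiction. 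Hence there exists $T_2\leq 0$ with $x_i\in B_\eta(A)$ for all $i\leq T_2$. Set $T=\min\{T_1,T_2\}$.

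Since $a,b\in A$ are accumulation points of the negative tail, pick indices $l<m\leq T$ with $d(x_l,a)<\eta$ and $d(x_m,b)<\eta$. Define $y_l=a$, $y_m=b$, and for each $l<j<m$ choose $y_j\in A$ with $d(y_j,x_j)<\eta$ (possible since $j\leq T\leq T_2$). For every $j\in\{l,\ldots,m-1\}$, the triangle inequality gives
\[
d(f(y_j),y_{j+1})\leq d(f(y_j),f(x_j))+d(f(x_j),x_{j+1})+d(x_{j+1},y_{j+1})<\tfrac{\epsilon}{3}+\tfrac{\epsilon}{3}+\eta<\epsilon,
\]
where the first summand uses uniform continuity of $f$ on $K$ (both $y_j,x_j\in K$ and $d(y_j,x_j)<\eta$), the second uses $j\leq T\leq T_1$, and the third uses the approximation. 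Thus $\langle y_l,y_{l+1},\ldots,y_m\rangle$ is, after relabelling, an $\epsilon$-chain in $A$ of length $m-l\geq 1$ from $a$ to $b$.

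The main obstacle, which is what forces us to go slightly beyond the trivial adaptation of Hirsch \emph{et al.}, is ensuring that the approximating chain is drawn from $A$ and not merely from the trajectory closure $K$. This is precisely what the ``eventually trapped in every $\eta$-neighbourhood of $A$'' claim gives us; without it the triangle-inequality estimate would only produce a chain in $K$. Everything else is the standard shadowing-free argument for showing that accumulation sets of asymptotically exact pseudo-orbits inherit internal chain transitivity, and the final conclusion for the compact case is immediate since then $A$ is automatically closed in $X\in 2^X$.
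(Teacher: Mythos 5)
Your proof is correct and supplies exactly what the paper leaves out: the paper explicitly omits the proof of the backward case, deferring to the time-reversed version of the Hirsch \emph{et al.}\ argument, and your construction (uniform continuity of $f$ on the compact closure $K$, the observation that the tail is eventually trapped in $B_\eta(A)$, and the three-term triangle-inequality estimate on $d(f(y_j),y_{j+1})$) is precisely that argument --- indeed it is the same device the paper itself deploys in the proof of Theorem~\ref{thmCharClosureAlphaEqualsICT}. No gaps.
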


\begin{theorem}\label{thmCharAlphaEqualsICT}
Let $(X,f)$ be a dynamical system. The following are equivalent:
\begin{enumerate}
    \item $\alpha_f =ICT_f$;
    \item $f$ has the backward orbital limit shadowing property;
    \item $f$ has the backward asymptotic orbital shadowing property;
    \item $f$ has the backward asymptotic strong orbital shadowing property.
\end{enumerate}
\end{theorem}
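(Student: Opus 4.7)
The plan is to establish the equivalence via the cycle $(4) \Rightarrow (3) \Rightarrow (2) \Rightarrow (1) \Rightarrow (4)$. The implication $(4) \Rightarrow (3)$ is immediate from the definitions, since the strong version demands $d_H < \epsilon$ for \emph{all} $N \geq K$ whereas the ordinary version only requires the existence of \emph{some} such $N$.

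For $(3) \Rightarrow (2)$, let $\langle x_i\rangle_{i \leq 0}$ be a backward asymptotic pseudo-orbit and let $\langle z_i \rangle_{i \leq 0}$ be the backward trajectory supplied by (3); write $A = \alpha(\langle x_i \rangle)$ and $B = \alpha(\langle z_i \rangle)$. I would invoke the standard fact that a nested decreasing sequence of nonempty compact sets in a compact metric space converges to its intersection in the Hausdorff metric. Applied to both $\overline{\{x_{i-N}\}_{i \leq 0}}$ (decreasing to $A$) and $\overline{\{z_{i-N}\}_{i \leq 0}}$ (decreasing to $B$), combined with the hypothesis that the Hausdorff distances between paired tails can be made arbitrarily small at suitable indices $N$, a triangle inequality argument forces $A = B$, which is exactly backward orbital limit shadowing.

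For $(2) \Rightarrow (1)$, we already have $\alpha_f \subseteq ICT_f$ by Lemma \ref{LemmaAlphaOmegaAreICT}, so it suffices to establish the reverse inclusion. Fix $A \in ICT_f$ and, adapting the construction from the proof of Theorem \ref{thmShadICT} but using only its backward half, concatenate finite $\nicefrac{1}{2^k}$-chains in $A$ that are $\nicefrac{1}{2^k}$-dense in $A$, arranging them with the less accurate chains placed at larger $|i|$ so that the pseudo-orbit errors tend to $0$ as $i \to -\infty$. This yields a backward asymptotic pseudo-orbit $\langle x_i \rangle_{i \leq 0}$ lying in $A$ with $\alpha(\langle x_i \rangle) = A$. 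Applying (2) produces a backward trajectory $\langle z_i \rangle$ with $\alpha(\langle z_i \rangle) = A$, placing $A$ in $\alpha_f$.

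Finally, for $(1) \Rightarrow (4)$, let $\langle x_i \rangle_{i \leq 0}$ be a backward asymptotic pseudo-orbit. By Lemma \ref{LemmaOmegaAlphaAsymPseudoAreICT} the set $A := \alpha(\langle x_i \rangle)$ lies in $ICT_f = \alpha_f$, so there is a backward trajectory $\langle z_i \rangle$ with $\alpha(\langle z_i \rangle) = A$. Both $\overline{\{x_{i-N}\}_{i\leq 0}}$ and $\overline{\{z_{i-N}\}_{i\leq 0}}$ are nested decreasing sequences of nonempty compact sets Hausdorff-converging to $A$, so for any $\epsilon>0$ one can select $K$ so that both tails lie within $\nicefrac{\epsilon}{2}$ of $A$ for every $N \geq K$, whence $d_H < \epsilon$ and (4) holds. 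The main obstacle I anticipate is the bookkeeping in $(3) \Rightarrow (2)$: the hypothesis only supplies \emph{some} shadowing index $N$ per $\epsilon$, and one must argue that as $\epsilon \to 0$ either the indices $N$ may be taken to infinity (yielding $A = B$ by Hausdorff convergence) or else they remain bounded, in which case a pair of tails must coincide exactly and this coincidence, together with the backward-trajectory structure of $\langle z_i \rangle$, still forces the two $\alpha$-limit sets to agree.
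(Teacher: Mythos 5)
Your implications $(4)\Rightarrow(3)$, $(2)\Rightarrow(1)$ and $(1)\Rightarrow(4)$ are all correct and close to the paper's own route (the paper does $(4)\Rightarrow(3)$ and $(2)\Rightarrow(4)$ as the easy steps, then $(3)\Rightarrow(1)\Rightarrow(2)$); in particular your $(1)\Rightarrow(4)$ correctly combines Lemma \ref{LemmaOmegaAlphaAsymPseudoAreICT} with the fact that the nested tails $\overline{\{x_{i-N}\}_{i\le 0}}$ Hausdorff-converge to $\alpha(\langle x_i\rangle)$. The genuine gap is in $(3)\Rightarrow(2)$, and it is precisely the obstacle you flag at the end. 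Hypothesis $(3)$ supplies, for each $\epsilon$, only \emph{some} index $N$ at which the two tail closures are $\epsilon$-close. If these indices can be chosen unbounded, your triangle-inequality argument does give $\alpha(\langle z_i\rangle)=\alpha(\langle x_i\rangle)$. But in the bounded case all you can extract (by pigeonhole) is that $\overline{\{z_{i-N^*}\}_{i\le 0}}=\overline{\{x_{i-N^*}\}_{i\le 0}}$ for one fixed $N^*$, and this does \emph{not} force the $\alpha$-limit sets to agree: the closure of a single tail records which points are visited, not which are visited infinitely often. Concretely, if $A=\{p\}\cup\{q_n\mid n\ge 1\}$ with $q_n\to p$, $f(q_n)=q_{n-1}$, $f(q_1)=f(p)=p$, then the backward trajectory $\langle\ldots,q_3,q_2,q_1\rangle$ has tail closure equal to $A$ at level $0$ but $\alpha$-limit set $\{p\}$, whereas a pseudo-orbit that is dense in $A$ in every tail has $\alpha$-limit set $A$; the two tails at level $0$ coincide exactly, yet the limit sets differ. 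So the claimed rescue in the bounded case is false as stated, and since your entire equivalence cycle passes through this one link, the proof is incomplete.

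The paper avoids a direct $(3)\Rightarrow(2)$ altogether, and this is the structural difference worth absorbing. It proves $(3)\Rightarrow(1)$ by applying the hypothesis only to the \emph{canonical} pseudo-orbit obtained by concatenating $\eta$-dense $\eta$-chains in $A$, which has the special feature that $\overline{\{x_{i-N}\}_{i\le 0}}=A$ for \emph{every} $N$; this makes one of the three error terms identically zero and the comparison with $\overline{\{z_{i-N}\}_{i\le 0}}$ yields $\alpha(\langle z_i\rangle)=A$, hence $ICT_f\subseteq\alpha_f$. It then gets $(1)\Rightarrow(2)$ abstractly: for an \emph{arbitrary} backward asymptotic pseudo-orbit, Lemma \ref{LemmaOmegaAlphaAsymPseudoAreICT} places $\alpha(\langle x_i\rangle)$ in $ICT_f=\alpha_f$, so \emph{some} backward trajectory realises it --- not necessarily the trajectory furnished by $(3)$. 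You can repair your argument with minimal change by keeping $(2)\Rightarrow(1)$ and $(1)\Rightarrow(4)\Rightarrow(3)$ as you have them, replacing $(3)\Rightarrow(2)$ with $(3)\Rightarrow(1)$ via the canonical pseudo-orbit, and adding the one-line $(1)\Rightarrow(2)$ from Lemma \ref{LemmaOmegaAlphaAsymPseudoAreICT}.
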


\begin{proof}
Clearly $(4) \implies (3)$. It is also easy to see that $(2) \implies (4)$. We will show $(3) \implies (1) \implies (2)$.

To this end, suppose that $f$ has backward asymptotic orbital shadowing. Let $A \in ICT_f$. Form a backward asymptotic pseudo-orbit $\langle x_i \rangle _{i \leq 0}$ by following the construction as in the proof of Theorem \ref{thmShadICT} and taking $x_i=a_i$ for all $i \leq 0$. (We may ignore the $\epsilon$ and $\delta$ in the construction, we can simply take $l=0$.) Recall that this means
\[A=\bigcap_{n\leq 0} \overline{\{x_i \mid i \leq n\}},\]
or equivalently,
\[ A= \alpha(\langle x_i \rangle).\]
Let $\langle z_i \rangle _{i\leq 0}$ be given by backward asymptotic orbital shadowing. Now let $\epsilon>0$ be given and let $N \in \mathbb{N}$ be such that 
\[d_H(\alpha(\langle z_i \rangle ),\overline{\{ z_{i-N}\} _{i\leq 0}})<\nicefrac{\epsilon}{3},\]
\[d_H(\overline{\{ z_{i-N}\} _{i\leq 0}}, \overline{\{x_{i-N}\}_{i \leq 0}})<\nicefrac{\epsilon}{3},\]
and
\[d_H(\overline{\{ x_{i-N}\} _{i\leq 0}}, \alpha(\langle x_i \rangle ))<\nicefrac{\epsilon}{3}.\]
By the triangle inequality it follows that $d_H(\alpha(\langle z_i \rangle ), A) <\epsilon$. Since $\epsilon>0$ was picked arbitrarily this implies that $A=\alpha(\langle z_i \rangle )$. Hence $ICT_f \subseteq \alpha_f$. By Lemma \ref{LemmaAlphaOmegaAreICT} we have $\alpha_f \subseteq ICT_f$, thus $(1)$ holds.

Now suppose that $\alpha_f=ICT_f$. Let $\langle x_i \rangle _{i \leq 0}$ be a backward asymptotic pseudo-orbit. By Lemma \ref{LemmaOmegaAlphaAsymPseudoAreICT} $\alpha(\langle x_i\rangle) \in ICT_f$. Since $\alpha_f = ICT_f$ there exists a backward trajectory $\langle z_i \rangle _{i \leq 0}$ with $\alpha(\langle z_i\rangle) = \alpha( \langle x_i \rangle )$. Hence $f$ has the backward orbital limit shadowing property, i.e.\ $(2)$ holds. 
\end{proof}

\begin{corollary}\label{CorollaryBackAsymSHad}
If $(X,f)$ has backward asymptotic shadowing then $\alpha_f=ICT_f$.
\end{corollary}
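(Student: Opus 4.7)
The plan is to derive this as a direct consequence of Theorem \ref{thmCharAlphaEqualsICT}: it suffices to show that backward asymptotic shadowing implies backward orbital limit shadowing, since condition $(2)$ of that theorem is equivalent to $(1)$.

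So, given a backward asymptotic pseudo-orbit $\langle x_i \rangle_{i \leq 0}$, I would first apply backward asymptotic shadowing to produce a backward trajectory $\langle z_i \rangle_{i \leq 0}$ with $\lim_{i \to -\infty} d(z_i, x_i) = 0$. The goal is then to show $\alpha(\langle z_i \rangle) = \alpha(\langle x_i \rangle)$.

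For the inclusion $\alpha(\langle x_i \rangle) \subseteq \alpha(\langle z_i \rangle)$: take $y \in \alpha(\langle x_i \rangle)$ and pick a decreasing sequence $\langle i_k \rangle_{k \in \mathbb{N}}$ of negative integers with $x_{i_k} \to y$. Since $d(z_{i_k}, x_{i_k}) \to 0$, the triangle inequality gives $z_{i_k} \to y$, so $y \in \alpha(\langle z_i \rangle)$. The reverse inclusion is entirely symmetric. Thus $\alpha(\langle z_i \rangle) = \alpha(\langle x_i \rangle)$, which is exactly backward orbital limit shadowing. Applying Theorem \ref{thmCharAlphaEqualsICT} ($(2) \implies (1)$) yields $\alpha_f = ICT_f$.

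There is no real obstacle here; the only point worth noting is that the proof does not require any compactness gymnastics because the matching indices $i_k$ force $z_{i_k}$ and $x_{i_k}$ to be close, so no subsequence extraction is needed beyond the one already provided by $y \in \alpha(\langle x_i \rangle)$.
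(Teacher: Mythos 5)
Your proposal is correct and follows the same route as the paper: reduce to showing that backward asymptotic shadowing implies backward orbital limit shadowing and then invoke the equivalence $(2)\iff(1)$ of Theorem \ref{thmCharAlphaEqualsICT}. The paper states this implication without proof (and, incidentally, cites Theorem \ref{thmCharClosureAlphaEqualsICT} where it clearly means Theorem \ref{thmCharAlphaEqualsICT}); your triangle-inequality argument along the matching indices $i_k$ correctly supplies the missing detail.
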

\begin{proof}
By Theorem \ref{thmCharClosureAlphaEqualsICT} it suffices to note that backward asymptotic shadowing implies backward orbital limit shadowing.
\end{proof}

\begin{remark} Combining theorems \ref{thmCharClosureAlphaEqualsICT} and \ref{thmCharAlphaEqualsICT} we have that if $\alpha_f$ is closed then the following are equivalent:
\begin{enumerate}
    \item $f$ has the backward orbital limit shadowing property;
    \item $f$ has the backward eventual strong orbital shadowing property;
    \item $f$ has the backward asymptotic (strong) orbital shadowing property;
    \item $f$ has the backward cofinal orbital shadowing property.
\end{enumerate}
\end{remark}

\begin{remark}
Examples \ref{Example1} and \ref{Example2}, together with Theorem \ref{thmCharAlphaEqualsICT} and \cite[Theorem 22]{GoodMeddaugh2016}, show that, unlike shadowing (see Lemma \ref{lemmaBackwardsShadowing}), neither the backward orbital limit shadowing property nor the backward asymptotic orbital shadowing nor the backward asymptotic strong orbital shadowing is equivalent to its forward analogue.
\end{remark}


\section*{References}

\bibliographystyle{plain} 
\bibliography{bib}

\end{document}